\newtheorem{theorem}{Theorem}[section]
\newtheorem{lemma}[theorem]{Lemma}
\newtheorem{corollary}[theorem]{Corollary}
\theoremstyle{definition}
\newtheorem{remark}[theorem]{Remark}
\numberwithin{equation}{section}
\newcommand{\N}{\mathbb{N}}                        
\newcommand{\K}{\mathbb{K}}                        
\newcommand{\R}{\mathbb{R}}                        
\newcommand{\C}{\mathbb{C}}                        
\newcommand{\supp}{\mathrm{supp}}                  
\newcommand{\inexp}{\mathrm{exp}}                  
\newcommand{\NN}{\mathfrak{N}}                     
\newcommand{\mm}{\mathfrak{m}}                     
\newcommand{\verts}{\mathcal{V}}
\newcommand{\Fcal}{\mathcal{F}}
\newcommand{\projd}{\mathrm{pd}}
\newcommand{\Kr}{\mathbb{K}[[x]]}
\newcommand{\Kra}{\mathbb{K} \langle x \rangle}
\begin{document}

\title{Algebraic Approximation of Cohen-Macaulay Algebras}

\author{Aftab Patel}
\address{Department of Mathematics, The University of Western Ontario, London, Ontario, Canada N6A 5B7}
\email{apate378@uwo.ca}

\subjclass[2010]{32S05, 32S10, 32B99, 32C07, 13H10, 13C14, 13J05}
\keywords{Cohen-Macaulay, Gorenstein, algebraic power series, Hilbert-Samuel function, flatness, special fibre, free resolution, approximation, Betti number}

\begin{abstract}	
	This paper shows that Cohen-Macaulay algebras can be algebraically approximated 
	in such a way that their Cohen-Macaulayness and minimal Betti numbers are preserved. This is 
	achieved by showing that finitely generated modules over power series rings can be 
	algebraically approximated in a manner that preserves their diagrams of initial exponents and their 
	minimal Betti numbers. These results are also applied to obtain an 
	approximation result for flat homomorphisms from  
	rings of power series to Cohen-Macaulay algebras. 
\end{abstract}
\maketitle


\section{Introduction}

Throughout what follows $\K$ will denote an arbitrary field, unless specified
otherwise. Also for a fixed integer $n$, $x$ will denote the $n$-tuple 
of variables $(x_1, \dots, x_n)$. The ring of formal power series in variables $x$ with 
coefficients in $\K$ will be denoted by $\Kr$. A formal power 
series $F \in \Kr$ is called \emph{algebraic} if it satisfies 
a non-trivial polynomial relation. The set of all such power series 
forms a ring that is called the \emph{ring of algebraic power series} and is 
denoted by $\Kra$. 
In the case when $\K$ is a complete 
real valued field, the notation $\K\{x\}$ is used 
for the ring of convergent power series in variables $x$. 
Further, throughout this paper, for a $\Kr$-module $M$, the notation $\dim M$ 
will be used for Krull dimension of $M$. 

Let $M \subseteq \Kr^p$ be a module generated by $F_1, \dots, F_s \in \Kr^p$. A 
module $M_{\mu}$ is called an \emph{algebraic approximation of $M$ of order $\mu$} if 
$M_{\mu}$ is generated by power series vectors whose entries are algebraic power series, 
and that agree with the generators 
$F_1, \dots, F_s$ up to order $\mu$. The main result proved in this paper,
Theorem \ref{thm:main}, establishes 
the existence of algebraic approximations of 
arbitrarily high order to modules $M \subseteq \Kr^p$, that
share specified algebraic properties with $M$. These algebraic properties 
are Hironaka's Diagram of Initial exponents
(see Section \ref{sec:diag} for a precise definition), and the 
minimal Betti numbers of $M$. 
(Recall that the minimal Betti numbers are 
the ranks of the free modules appearing in a minimal free resolution of $M$.) 
Aside from standard facts in commutative algebra, the two main tools used to 
prove this theorem are Artin's Approximation Theorem \cite[Theorem 1.10]{Ar2}, 
and the theory of standard bases of modules $M\subseteq \Kr$, which is 
analogous to the theory of Gr\"{o}bner bases of modules over polynomial 
rings and was developed by T. Becker in \cite{Be1}. 

The motivation for the choice of the particular algebraic properties 
above is 
the generalization to arbitrary fields of the result \cite[Theorem 8.1]{JP}, proved 
by J. Adamus and the author, on the existence 
of arbitrarily high order algebraic approximations
of Cohen-Macaulay local analytic algebras over $\R$ or $\C$ that preserve the Hilbert-Samuel 
function. The diagram of initial exponents of an ideal completely  
determines its Hilbert-Samuel function (see Theorem \ref{lem:hsfunc}).
Recall that for an ideal $I \subseteq \Kr$ the \emph{Hilbert-Samuel} function of it is defined as 
\begin{equation*}
	H_I(\eta) = \dim \Kr/I + \mm^{\eta + 1}, \text{ for } \eta \in \N,
\end{equation*}
where $\mm$ is the maximal ideal of $\Kr$. 
Also, recall that for $\K = \R$ or $\C$ a local analytic algebra is a 
quotient of the form $\K\{x\}/I$ where $I \subseteq \K\{x\}$ is an 
ideal. The property of Cohen-Macaulayness is related to the 
minimal Betti numbers of $\K\{x\}/I$; specifically, it can 
be determined by the number of non-zero Betti numbers (see Theorem \ref{thm:cmthm}). 
The result proved in this paper, that achieves the 
generalization of \cite[Theorem 8.1]{JP}, is Theorem \ref{thm:cmbettiapprox}, 
and follows directly from Theorem \ref{thm:main}. In fact, Theorem \ref{thm:cmbettiapprox} 
achieves more than just the generalization of \cite[Theorem 8.1]{JP} to 
arbitrarily fields, as the minimal Betti numbers are 
a finer grained property than Cohen-Macaulayness. As a consequence of 
this, an approximation result analogous to \cite[Theorem 8.1]{JP} for 
Gorenstein local algebras is obtained as a corollary to Theorem \ref{thm:cmbettiapprox}. 
This is Corollary \ref{cor:gorensteinapprox}. It should be noted here that 
generalization of the finite determinacy result proved by J. Adamus and the author in \cite{JP} for 
algebras $\K\{x\}/I$ that are complete intersections, \cite[Theorem 7.3]{JP}, to 
arbitrary fields, already exists and was proved by Srinivas and Trivedi in \cite{ST} using 
techniques different from those used in \cite{JP}. In fact, an 
even stronger result exists in the special case of 
isolated complete intersection singularities, due to Greuel and Pham \cite[Theorem 1.5]{Gre}. 

A generalization of the algebraic counterpart of the result on flat maps from \cite[Theorem 1.2]{AYP} is 
also obtained as a by-product of the proofs of Theorem \ref{thm:main} and Theorem \ref{thm:cmbettiapprox}
(Theorem \ref{thm:flatapprox}). Briefly, \cite[Theorem 1.2]{AYP} states that given a flat analytic map from 
a real or complex analytic germ whose local ring 
is Cohen-Macaulay into a germ of euclidean space, one can find arbitrarily high order 
algebraic approximations to the domain that are Cohen-Macaulay, and to the map that 
are flat, which preserve the Hilbert-Samuel function of the special fibre. In 
this paper a homomorphism of rings $\phi:A \rightarrow B$ will be called 
\emph{flat} if it makes $B$ into a flat $A$-module, and in the case 
when $A$ and $B$ are local rings with maximal ideals $\mm_A$ and $\mm_B$ 
respectively, the ring $B/\phi(\mm_A)B$ will be called the 
\emph{special fibre} of $\phi$. With these definitions 
Theorem \ref{thm:flatapprox} can be stated as follows:
Let $y$ denote 
the $m$-tuple of variables $(y_1, \dots, y_m)$. If one has a flat homomorphism from $\K[[y]]$ to 
$\K[[x]]/I$, and if $\K[[x]]/I$ is Cohen-Macaulay, 
then there exist arbitrarily high order
algebraic approximations to the homomorphism and $\K[[x]]/I$ such that the Hilbert-Samuel 
function and minimal Betti numbers of the special fibre of the approximants is 
the same as those of the special fibre of the original homomorphism and 
such that the approximating homomorphisms are flat. The precise notion of 
algebraic approximation in the context of homomorphisms of rings is defined 
in a manner analogous to that for modules (see the statement of 
Theorem \ref{thm:flatapprox}). The result \cite[Theorem 1.2]{AYP} is a 
direct consequence of Theorem \ref{thm:flatapprox}, and remarkably, 
its proof via Theorem \ref{thm:main} and Theorem \ref{thm:cmbettiapprox}
has fewer dependencies and is conceptually simpler than that of 
\cite[Theorem 1.2]{AYP}. 


The structure of this paper is as follows: Section \ref{sec:bkg} presents relevant definitions 
and theorems used in the proofs of the main theorems of the paper. Section \ref{sec:main} presents
the proof of Theorem \ref{thm:main}. Section \ref{sec:conv} provides 
justification for the validity of the results of 
this paper for rings of convergent power series in the case 
when the field $\K$ is a complete real valued field. Section \ref{sec:appl} presents the applications of 
Theorem \ref{thm:main}, specifically the proof of Theorem \ref{thm:cmbettiapprox} in 
Subsection \ref{sec:cmapprox}, and the proof of Theorem \ref{thm:flatapprox} in 
Subsection \ref{sec:flatapprox}.

\subsection*{Acknowledgements}
The author would like to thank Toshizumi Fukui whose comments helped in the 
conception of
the main idea of the proof of Theorem \ref{thm:main}. 
The author would also like to thank Gert-Martin Greuel, for pointing out the 
result referenced in the paper \cite{Gre} and also, suggesting that the 
author attempt to generalize the results in \cite{AYP} to the case of arbitrary fields. 
This work was done while the author was employed as a Research Associate at the University
of Western Ontario under the supervision of Janusz Adamus. The author's salary was paid 
out of funds associated with NSERC (Natural Sciences and Engineering Research Council of Canada) grant no. RGPIN-2018-04239 awarded to Janusz Adamus. 

\section{Background}
\label{sec:bkg}
\subsection{Power series vectors} 
For $\alpha = (\alpha_1, \dots, \alpha_n) \in \N^n$, 
the monomial $x_{1}^{\alpha_1} \cdots x_n^{\alpha_n}$ will be denoted by $x^{\alpha}$. 
For an integer $p \in \N$, let $e_1, \dots, e_p \in \K[[x]]^p$ denote column vectors with 
$1$ in the $i$th entry and zeros everywhere else, these are called the 
\emph{standard basis vectors of $\Kr^p$}. An element of 
$\K[[x]]^p$ of the form $x^{\alpha}e_i$ for $(\alpha, i) \in \N^{n} \times \{1, \dots, p\}$ is called a \emph{monomial term} and 
$(\alpha, i)$ is called its \emph{exponent}. 
Let $F \in \K[[x]]^p$ be a power series vector. 
With the above notation $F$ can be expressed as a sum of monomial terms as follows 
\begin{equation*}
	F = \sum_{(\alpha, i) \in \N^n \times \{1, \dots, p\}} f_{\alpha,i} x^{\alpha} e_i.
\end{equation*}
The \emph{support of $F$} is $\supp (F) = \{ (\alpha, i) \in \N^n \times \{1, \dots, p\} : f_{\alpha, i} \neq 0\}$. 
If $\mu \in \N$ then 
\begin{equation*}
	j^{\mu} F = \sum_{(\alpha, i) \in N^n \times \{1, \dots, p\},\\|\alpha| \leq \mu} f_{\alpha,i} x^{\alpha} e_i.
\end{equation*}
where $|\alpha| = \alpha_1 + \dots + \alpha_n$, is called the \emph{$\mu$-jet} of $F$. 

\subsection{The diagram of initial exponents}
\label{sec:diag}

For $\alpha = (\alpha_1, \dots, \alpha_n) \in \N^n$, let $|\alpha| = \sum_{i = 1}^n \alpha_i$. 
Then, the lexicographic ordering of the $n+2$-tuples $(|\alpha|, j, \alpha_1, \dots, \alpha_n)$,
where $1 \leq j \leq p$, defines a total ordering on $\N^{n} \times \{1, \dots, p\}$.
The \emph{initial exponent} of $F \in \Kr^p$ is $\inexp(F) = \min \{ (\alpha, i): (\alpha, i) \in \supp(F) \}$, 
where the minimum is taken with respect to the total ordering just defined. If 
$M \subseteq \Kr^p$ is a finitely generated module then the 
\emph{diagram of initial exponents} of $M$ is $\NN(M) = \{\inexp(F): F \in M\}$. 
There exists a unique smallest (finite) set $\verts(M) \subseteq \NN(M)$ such that 
$\NN(M) = \verts(M) + \N^n = \{(\alpha + \beta, i) : (\alpha, i) \in \verts(M), \beta \in \N^n\}$ (\cite[Lemma 3.8]{BM1}). 
The elements of $\verts(M)$ are called the \emph{vertices} of the diagram $\NN(M)$.

In the case when the module under consideration is an ideal $I \subseteq \Kr$, the 
diagram of initial exponents is related to the Hilbert-Samuel function of the 
ideal $I$ as follows: 
\begin{lemma}[{\cite[Lemma 6.2]{JP}}]
	\label{lem:hsfunc}
	Let $I \in \K[[x]]$ be an ideal. Then, 
	\begin{equation*}
		H_I(\eta)=\#\{\beta\in\N^n\setminus\NN(I):|\beta|\leq\eta\},\quad\mathrm{for\ all\ }\eta\geq1\,.
	\end{equation*}
\end{lemma}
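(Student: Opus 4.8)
The plan is to reduce the statement to the well-known fact that the monomials $x^\beta$ with $\beta \in \N^n \setminus \NN(I)$ form a $\K$-vector space basis of the quotient $\Kr/I$, in a way that is compatible with the $\mm$-adic filtration. First I would recall Hironaka's formal division (or the theory of standard bases in the sense of Becker, \cite{Be1}): given a standard basis $G_1, \dots, G_r$ of $I$ with $\inexp(G_j) = \alpha^{(j)}$, every $F \in \Kr$ can be written uniquely as $F = \sum_j Q_j G_j + R$, where the remainder $R$ has $\supp(R) \subseteq \N^n \setminus \NN(I)$. Consequently the residue classes of the monomials $\{x^\beta : \beta \in \N^n \setminus \NN(I)\}$ span $\Kr/I$ over $\K$, and a short argument using the minimality of initial exponents shows they are linearly independent: a nontrivial $\K$-linear relation among them would produce an element of $I$ whose initial exponent lies outside $\NN(I)$, a contradiction.

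Next I would upgrade this to a filtered statement. The key observation is that the division algorithm respects orders in the appropriate sense: if $\operatorname{ord}(F) \geq \eta + 1$ then the remainder $R$ can be taken with $\operatorname{ord}(R) \geq \eta + 1$ as well, because each step of the division replaces a term by terms of order at least as large (the ordering on $\N^n \times \{1,\dots,p\}$ refines the partial order by total degree $|\alpha|$). It follows that $\mm^{\eta+1} + I$ corresponds, under the isomorphism $\Kr/I \cong \bigoplus_{\beta \notin \NN(I)} \K \cdot x^\beta$, exactly to the span of those $x^\beta$ with $\beta \notin \NN(I)$ and $|\beta| \geq \eta + 1$. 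Therefore
\begin{equation*}
	\Kr/(I + \mm^{\eta+1}) \;\cong\; \bigoplus_{\substack{\beta \in \N^n \setminus \NN(I) \\ |\beta| \leq \eta}} \K \cdot x^\beta
\end{equation*}
as $\K$-vector spaces, and taking dimensions gives $H_I(\eta) = \#\{\beta \in \N^n \setminus \NN(I) : |\beta| \leq \eta\}$, which is the claim.

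The main obstacle is making the filtered compatibility in the second step fully rigorous, i.e.\ verifying that Hironaka division can be performed so that it does not decrease orders and that the remainder map sends $\mm^{\eta+1}$ into the span of high-degree standard monomials. This is where the specific choice of the term ordering — lexicographic on $(|\alpha|, j, \alpha_1, \dots, \alpha_n)$, which places degree first — is essential, since it guarantees that every monomial appearing during reduction of a term $x^\alpha$ has degree $\geq |\alpha|$. Once that monotonicity is in hand, the rest is bookkeeping: one checks that the ideal $I + \mm^{\eta+1}$ is carried precisely onto the span of $\{x^\beta : \beta \in \NN(I) \text{ or } |\beta| > \eta\}$, and the dimension count is immediate. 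Since this lemma is quoted from \cite[Lemma 6.2]{JP}, I would keep the argument brief and cite the standard-basis machinery of Section \ref{sec:bkg} for the technical underpinnings.
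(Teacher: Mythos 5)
The paper does not supply its own proof here: Lemma~\ref{lem:hsfunc} is quoted verbatim from \cite[Lemma 6.2]{JP}, so there is no in-paper argument to compare against. Evaluated on its own merits, your proof is correct and is the standard argument for this fact. The essential point you identify is exactly right: because the term ordering on $\N^n$ refines total degree (the first coordinate compared is $|\alpha|$), Hironaka division satisfies $\inexp(R) \geq \inexp(F)$, and since the smallest exponent in $\supp(R)$ realizes $\operatorname{ord}(R)$, this gives $\operatorname{ord}(R) \geq \operatorname{ord}(F)$. Combined with uniqueness of the remainder (so that any $R$ supported off $\NN(I)$ with $\operatorname{ord}(R) \geq \eta+1$ is its own remainder and hence lies in the image of $\mm^{\eta+1}$), this yields precisely $\Kr/(I+\mm^{\eta+1}) \cong \bigoplus_{\beta \notin \NN(I),\, |\beta| \leq \eta} \K\,x^\beta$, whence the count.

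One small notational caution: the isomorphism $\Kr/I \cong \{R \in \Kr : \supp(R) \subseteq \N^n \setminus \NN(I)\}$ identifies the quotient with a space of \emph{power series} supported off the diagram, not the algebraic direct sum $\bigoplus_{\beta \notin \NN(I)} \K\,x^\beta$ (which would allow only finitely many nonzero terms). This does not affect your conclusion, since after passing to $\Kr/(I+\mm^{\eta+1})$ only the finitely many $\beta$ with $|\beta| \leq \eta$ survive, but it is worth stating the intermediate isomorphism with the completed object to avoid confusion.
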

\begin{remark}
	\label{rem:HSpoly}
	\begin{itemize}
		\item[(i)] For large values of $\eta \in \N$, the Hilbert-Samuel function $H_I(\eta)$ coincides with a 
			polynomial called the \emph{Hilbert-Samuel polynomial}.  
		\item[(ii)] The degree of the Hilbert-Samuel polynomial of an ideal $I\subseteq \Kr$ is equal to $\dim \Kr/I$. 
	\end{itemize}
\end{remark}

\subsection{Standard bases and standard representations} 
\label{sec:sb}
In what follows a module in $\Kr^p$ generated by $F_1, \dots, F_s \in \Kr^p$, 
will be denoted by $(F_1, \dots, F_s)$. 
Let $M = (F_1, \dots, F_s) \subseteq \Kr^p$ be a finitely generated module. 
A set $\{ G_1, \dots, G_t \} \subseteq M$ is called a \emph{standard basis for $M$}
if $\verts(M) \subseteq \{ \inexp(G_1), \dots, \inexp(G_t) \}$. It is a 
consequence of Hironaka's Division Theorem \cite[Theorems 3.1, 3.4]{BM1} and \cite{Be3} that 
a standard basis for $M$ generates it. 

If $F, H_1, \dots, H_r \in \Kr^p$, then $F$ has a \emph{standard representation} in 
terms of $H_1, \dots, H_r$ if there exist $Q_1, \dots, Q_r \in \Kr$ such 
that, 
\begin{equation*}
	F = \sum_{i = 1}^{r} Q_i H_i \text{ and } \inexp(F) = \min\{\inexp(Q_i H_i): i = 1, \dots, r\}.
\end{equation*}
In the above, by convention, it is assumed that $\inexp(F) < \inexp(0)$ for all $F \neq 0$. 

Suppose now that $F = \sum_{(\alpha, i) \in \N^n \times \{1, \dots, p\}} f_{\alpha, i} x^{\alpha} e_i$, and 
that \\ $G = \sum_{(\alpha, i) \in \N^n \times \{1, \dots, p\}} g_{\alpha, i} x^{\alpha} e_i$. 
Let $(\alpha_F, i_F) = \inexp(F)$, $(\alpha_G, i_G) = \inexp(G)$,  
and $x^{\gamma} = \mathrm{lcm}(x^{\alpha_F}, x^{\alpha_G})$. Define, 
\begin{equation*}
	P_{F, G} = 
	\begin{cases}
		f_{\alpha_F, i_F} x^{\gamma - \alpha_G} \text{ if } i_F = i_G, \\
		0, \text{ otherwise, }
	\end{cases}
\end{equation*}
\begin{equation*}
	P_{G, F} = 
	\begin{cases}
		g_{\alpha_G, i_G} x^{\gamma - \alpha_F} \text{ if } i_F = i_G, \\
		0, \text{ otherwise }
	\end{cases}
\end{equation*}
With the above, the \emph{s-series vector} of $F$ and $G$ is $S(F, G) = P_{F, G} F - P_{G, F} G$. 
The following theorem, 
which follows directly from the corresponding result for 
ideals in power series rings \cite[Theorem 4.1]{Be1}, gives a criterion for determining when a collection of 
power series vectors in $\Kr^p$ forms a standard basis for the module generated 
by it. 
\begin{theorem}[{cf. \cite[Theorem 4.1]{Be1}}]
	\label{thm:becker}
	If $S$ is a finite subset of $\Kr^p$ then the elements of $S$ form a 
	standard basis for the module generated by them if and only if 
	for each pair $G_1, G_2 \in S$, the s-series vector $S(G_1, G_2)$ has a 
	standard representation in terms of elements of $S$. 
\end{theorem}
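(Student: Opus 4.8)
The plan is to follow the proof of the ideal case \cite[Theorem 4.1]{Be1} closely, replacing Hironaka's division theorem for ideals with its module analogue \cite[Theorems 3.1, 3.4]{BM1}; the two implications call for somewhat different arguments.

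For the ``only if'' direction, suppose $S = \{G_1, \dots, G_t\}$ is a standard basis of $M := (G_1, \dots, G_t)$, so that $\NN(M) = \bigcup_{k}(\inexp(G_k) + \N^n)$ (the inclusion $\supseteq$ is automatic, and $\subseteq$ is the standard-basis hypothesis together with the fact that $\verts(M)$ generates $\NN(M)$ under translation). Fix a pair; if the leading positions of $G_i$ and $G_j$ differ then $S(G_i, G_j) = 0$ and there is nothing to prove, so assume they agree. Since $S(G_i, G_j) \in M$, I would divide $S(G_i, G_j)$ by $G_1, \dots, G_t$ using the division theorem to get $S(G_i, G_j) = \sum_k Q_k G_k + R$ with $\supp(R)$ disjoint from $\bigcup_k(\inexp(G_k) + \N^n) = \NN(M)$ and with $\inexp(Q_k G_k) \ge \inexp(S(G_i, G_j))$ for every $k$. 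Then $R = S(G_i, G_j) - \sum_k Q_k G_k$ lies in $M$, so $R \neq 0$ would force $\inexp(R) \in \NN(M)$, contradicting the support condition; hence $R = 0$. Finally, $\min_k \inexp(Q_k G_k) \ge \inexp(S(G_i, G_j))$ combined with the general inequality $\inexp\big(\sum_k Q_k G_k\big) \ge \min_k \inexp(Q_k G_k)$ forces equality, so $\sum_k Q_k G_k$ is a standard representation.

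The ``if'' direction is where the real work lies. Here I would prove that $\NN(M) \subseteq \NN' := \bigcup_k(\inexp(G_k) + \N^n)$; since the reverse inclusion always holds, this gives $\NN(M) = \NN'$, and then minimality of $\verts(M)$ yields $\verts(M) \subseteq \{\inexp(G_1), \dots, \inexp(G_t)\}$, as required. Fix $F \in M \setminus \{0\}$ and a representation $F = \sum_k Q_k G_k$, and put $\delta := \min_k \inexp(Q_k G_k)$, so $\delta \le \inexp(F)$. If $\delta = \inexp(F)$, then for the index $k_0$ attaining the minimum one has $\inexp(F) = \inexp(Q_{k_0}) + \inexp(G_{k_0}) \in \inexp(G_{k_0}) + \N^n \subseteq \NN'$, and we are done. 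If $\delta < \inexp(F)$, write $\delta = (\beta, i)$; then the $x^\beta e_i$-coefficients of the terms $Q_k G_k$ with $\inexp(Q_k G_k) = \delta$ must cancel, and the classical syzygy-of-monomials identity rewrites the sum of their leading contributions $\sum_{k}(\text{leading term of } Q_k)\, G_k$ as a $\K$-linear combination of the vectors $x^{\beta - \gamma_{kl}} S(G_k, G_l)$, where $x^{\gamma_{kl}} = \mathrm{lcm}(x^{\alpha_{G_k}}, x^{\alpha_{G_l}})$ divides $x^\beta$. Substituting the standard representation of each $S(G_k, G_l)$ --- the one place the hypothesis is used --- and recombining with the higher-order tails of the $Q_k$ produces a new representation $F = \sum_k \tilde{Q}_k G_k$ with $\min_k \inexp(\tilde{Q}_k G_k) > \delta$. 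Since every value $\delta' \le \inexp(F)$ lies in a fixed finite set (its total-degree component is bounded by $|\alpha_F|$), this rewriting can be applied only finitely many times and must terminate in the case $\delta = \inexp(F)$, completing the proof.

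The step I expect to be the main obstacle is the last one: verifying that substituting standard representations of the $S(G_k, G_l)$ re-introduces no term of initial exponent $\le \delta$. This rests on the fact that $\inexp\big(S(G_k, G_l)\big) > (\gamma_{kl}, i)$ --- the leading terms of $P_{G_k, G_l} G_k$ and $P_{G_l, G_k} G_l$ coincide and cancel --- together with the multiplicativity $\inexp(Q H) = \inexp(Q) + \inexp(H)$ and the compatibility of the term order with multiplication by monomials; it also uses $\gamma_{kl} \le \beta$ componentwise, which is what keeps the correction coefficients inside $\Kr$. The termination argument, handled here through finiteness of the set of admissible $\delta$ below $\inexp(F)$ rather than through a well-ordering, is the one genuinely local feature separating this from the Buchberger criterion over polynomial rings.
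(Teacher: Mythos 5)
Your proof is correct and supplies precisely what the paper declines to write down: the paper (and Remark \ref{rem:sbb}(ii)) simply asserts that Theorem \ref{thm:becker} follows by arguments ``almost identical'' to Becker's proof of the ideal case, and your sketch is exactly that adaptation --- Hironaka division in its module form for the ``only if'' direction, and the syzygy-of-monomials rewriting with strictly increasing $\delta$ for the ``if'' direction. You also correctly identify and handle the one genuinely local subtlety, namely that termination is secured by the finiteness of exponents with total degree at most $|\alpha_F|$ (the order refines degree) rather than by the well-ordering argument familiar from the polynomial-ring Buchberger criterion.
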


\begin{remark}
	\label{rem:sbb}
	\begin{itemize}
		\item[(i)] This is a power series analogue of the corresponding 
			result for submodules of free modules over $\K[x]$ called 
			Buchberger's criterion \cite{Eis}. 
		\item[(ii)] The result \cite[Theorem 4.1]{Be1}, is for the case 
			of ideals generated by collections of power series 
			in $\Kr$, however, the proof of the corresponding result, Theorem
			\ref{thm:becker} above, for modules follows by arguments that are 
			almost identical to those used in the proof of \cite[Theorem 4.1]{Be1}. 
		\item[(iii)] The theory of standard basis and the diagram of initial exponents
			in Sections \ref{sec:sb} and \ref{sec:diag} can be developed for 
			orderings other than the one used here, however, this will not 
			be used in this paper.  
		\item[(iv)] Suppose that $F_1, \dots, F_s$ is a standard basis for 
			 a module $M \subseteq \Kr^p$ and that $G_1, \dots, G_s$ is a standard basis for
			 a module $N \subseteq \Kr^p$, then $\{\inexp (G_1), \dots, \inexp (G_s) \} = 
			 \{\inexp (F_1), \dots, \inexp (F_s)\}$ implies that $\NN(M) = \NN(N)$. This
			 follows directly from Hironaka's Division Theorem \cite[Theorem 3.1]{BM1}, \cite{Be3}.
	\end{itemize}
\end{remark}

\subsection{Facts on homomorphisms between free modules over $\Kr$}
By choosing a suitable basis of $\Kr^m, \Kr^n$ for $n, m \in \N$, a homomorphism $\phi: \Kr^m \rightarrow \Kr^n$ 
can be represented by a matrix of dimension $n \times m$ with entries in $\Kr$. 
A homomorphism $\phi:\Kr^m \rightarrow \Kr^n$ is called \emph{algebraic} if the matrix of $\phi$ has 
entries in $\Kra$. The 
following condition for the injectivity of $\phi$ follows immediately from \cite[Proposition I.2.9]{BB},
\begin{lemma}
	\label{lem:injective}
	A homomorphism $\phi: \Kr^m \rightarrow \Kr^n$ is injective if and only if none of the
	the $m\times m$ 
	minors of a matrix representation of $\phi$ is 0.
\end{lemma}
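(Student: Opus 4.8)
The plan is to reduce the statement to ordinary linear algebra over the fraction field $L := \mathrm{Frac}(\Kr)$, which exists since $\Kr$ is an integral domain. Fix an $n \times m$ matrix $A$ over $\Kr$ representing $\phi$ with respect to the standard bases, and read the assertion as: $\phi$ is injective if and only if the $m \times m$ minors of $A$ do not all vanish, equivalently $I_m(A) \neq (0)$, where $I_m(A) \subseteq \Kr$ denotes the ideal generated by these minors. (This is the natural reading; requiring every $m\times m$ minor to be nonzero would be strictly stronger than injectivity, as already the case $m = 1 < n$ shows.)

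First I would show that $\phi$ is injective if and only if the induced $L$-linear map $\phi_L \colon L^m \to L^n$ given by $A$ is injective. The forward direction uses that localization $\Kr \to L$ is flat, so applying $- \otimes_{\Kr} L$ to the injection $\phi$ preserves injectivity; the backward direction uses that $\Kr^m$ is torsion-free, hence embeds in $L^m$, so a map whose extension to $L^m$ is injective is a fortiori injective on $\Kr^m$. Next, over the field $L$ the matrix $A$ defines an injective map if and only if its columns are $L$-linearly independent, i.e. $\mathrm{rank}_L A = m$, i.e. some $m \times m$ minor of $A$ is nonzero in $L$ — the classical characterization of rank via nonvanishing minors. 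Since the $m\times m$ minors lie in $\Kr$ and $\Kr \hookrightarrow L$, such a minor vanishes in $L$ if and only if it vanishes in $\Kr$; this identifies ``$\phi_L$ injective'' with ``$I_m(A) \neq (0)$''. Concretely: if all $m\times m$ minors vanish then $\ker \phi_L \neq 0$, and clearing denominators in a nonzero kernel element produces a nonzero element of $\ker \phi$; conversely, if the minor on some rows $i_1 < \dots < i_m$ is nonzero, the corresponding $m\times m$ submatrix $B$ is invertible over $L$, so $Av = 0$ forces $Bv = 0$, hence $v = 0$, for any $v \in \Kr^m \subseteq L^m$.

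The remaining point is independence of the matrix representation: a different choice of bases of $\Kr^m$ and $\Kr^n$ replaces $A$ by $PAQ$ with $P \in \mathrm{GL}_n(\Kr)$ and $Q \in \mathrm{GL}_m(\Kr)$, and the Cauchy–Binet formula gives $I_m(PAQ) = I_m(A)$, so the condition $I_m(A) \neq (0)$ is intrinsic to $\phi$. Chaining the three reductions proves the lemma. I do not expect a genuine obstacle; the only delicate points are invoking flatness of localization (together with torsion-freeness of $\Kr^m$) in the first reduction, and being careful about the precise reading of the minor condition as ``the $m\times m$ minors do not all vanish'', equivalently $\mathrm{rank}\,\phi = m$.
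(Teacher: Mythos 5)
Your argument is correct and self-contained, and you are right to flag that the lemma as literally phrased (``none of the $m\times m$ minors is $0$'') overstates the matter: your $m=1<n$ example already exhibits an injective map with a vanishing $1\times 1$ minor. The intended (and actually used) statement is that not all maximal minors vanish, i.e.\ $I_m(A)\neq(0)$; that is precisely what the proof of Theorem~\ref{thm:main} exploits when a single nonzero minor of $\phi_c$ is transferred to $\phi_c^{(\mu)}$. As for the paper's proof: there is none on the page --- the lemma is asserted to ``follow immediately from \cite[Proposition I.2.9]{BB}'', a general McCoy-type criterion for injectivity of a map of finite free modules over a Noetherian ring phrased via the grade (equivalently, the annihilator) of the ideal of maximal minors; over the domain $\Kr$ this collapses to $I_m(\phi)\neq(0)$, which is exactly what you prove. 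Your route --- pass to $L=\mathrm{Frac}(\Kr)$ using flatness of localization together with torsion-freeness of $\Kr^m$, apply the classical rank-via-minors criterion over a field, and invoke Cauchy--Binet to make $I_m$ basis-independent --- is more elementary and makes the lemma independent of the external reference, at the mild cost of explicitly relying on $\Kr$ being a domain (which of course it is). Both routes are sound.
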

If $\phi:\Kr^m \rightarrow \Kr^n$ is given by the following matrix, 
\begin{equation*}
	\phi = 
	\begin{pmatrix}
		| & & | \\
		S_1 & \cdots &  S_m \\
		| & & |
	\end{pmatrix}
\end{equation*}
where $S_1, \dots, S_m \in \Kr^{n}$ then the \emph{$\mu$-jet} of 
$\phi$ is, 
\begin{equation*}
	j^{\mu}\phi = 
	\begin{pmatrix}
		| & & | \\
		j^{\mu}S_1 & \cdots &  j^{\mu}S_m \\
		| & & |.
	\end{pmatrix}
\end{equation*}

\section{Free resolutions} 
\label{sec:freeres}
Throughout this section let $M \subseteq \Kr^p$ be a finitely generated module. 
For some integer $c$, and integers $n_0, \dots, n_c$ an exact sequence of the form 
\begin{equation}
	\label{eq:freeresex}
	\Fcal_{M}: 0 \xrightarrow{} \Kr^{n_c} \xrightarrow{\phi_{c}} \Kr^{n_{c-1}} \xrightarrow{\phi_{c-1}} \cdots
	\xrightarrow{\phi_{1}} \Kr^{n_0} \xrightarrow{\phi_0} M \xrightarrow{} 0
\end{equation}
is called a \emph{finite free resolution of $M$}. By \cite[Corollary 19.6]{Eis}, all finitely generated 
$\Kr$-modules have a finite free resolution. The number $c$ in the above is called the \emph{length} 
of the free resolution $\Fcal_{M}$. 
If the homomorphisms $\phi_i$ for $ 1 \leq i \leq c$ in 
a finite free resolution $\Fcal_{M}$ have the property that $\mathrm{im}(\phi_{i}) \subseteq \mm \Kr^{n_{i-1}}$, 
then $\Fcal_{M}$ is called a \emph{minimal free resolution} of $M$. By \cite[Lemma 19.4]{Eis} the minimality 
of $\Fcal_{M}$ is equivalent to the condition that for each $1 \leq i \leq c$, a basis for $\Kr^{n_{i-1}}$ 
maps onto a minimal set of generators (i.e., a set of generators with 
minimal cardinality) of $\mathrm{coker}(\phi_i)$. Further, by \cite[Theorem 20.2]{Eis}, 
all minimal free resolutions of $M$ have the same length, which is called the \emph{projective dimension of $M$}, 
and denoted by $\projd_{\Kr}(M)$. The numbers $n_0, n_1, \dots, n_c$ in the minimal free resolution 
$\Fcal_{M}$ of a finitely generated module $M \subseteq \Kr^p$ are called the \emph{minimal Betti numbers} 
of $M$ and are denoted by $\beta^M_0, \dots, \beta^M_c$. 

\begin{remark}
	\label{rem:idealres}
Suppose that $I \subseteq \Kr$ is an ideal. Then there exists an exact sequence $0 \rightarrow I \xhookrightarrow{\iota} \Kr \xrightarrow{\pi} \Kr/I \rightarrow 0$, where $\iota$ is the canonical inclusion of $I$ in $\Kr$ and $\pi$ is the canonical homomorphism from $\Kr$ to $\mathrm{coker} (\iota) = \Kr/I$. Consequently, if 
\begin{equation*}
	\Fcal_{I}: 0 \xrightarrow{} \Kr^{n_c} \xrightarrow{\phi_{c}} \Kr^{n_{c-1}} \xrightarrow{\phi_{c-1}} \cdots
	\xrightarrow{\phi_{1}} \Kr^{n_0} \xrightarrow{\phi_0} I \xrightarrow{} 0
\end{equation*}
is a minimal free resolution of $I$, then 
\begin{equation*}
	\Fcal_{\Kr/I}: 0 \xrightarrow{} \Kr^{n_c} \xrightarrow{\phi_{c}} \cdots
	\xrightarrow{\phi_{1}} \Kr^{n_0} \xrightarrow{\phi_0} \Kr \xrightarrow{\pi} \Kr/I \rightarrow 0
\end{equation*}
is a minimal free resolution of $\Kr/I$. Note that in the above, by abuse of notation, 
$\phi_0$ is used to denote both the homomorphism $\phi_0:\Kr^{n_0} \rightarrow I$ and 
the composition $\iota \circ \phi_0 : \Kr^{n_0} \rightarrow \Kr$. 
\end{remark}

The following characterization of Cohen-Macaulay rings of the form $\Kr/I$ 
follows directly from \cite[Corollary 19.15]{Eis} and the fact that 
$\Kr$ is a regular local ring. 
\begin{theorem}[{cf. \cite[Corollary 19.15]{Eis}}] 
	\label{thm:cmthm}
	If $I \subseteq \Kr$ is an ideal, then $\Kr/I$ is Cohen-Macaulay if and only if 
	\begin{equation*}
		\projd_{\Kr}(\Kr/I) = \dim \Kr - \dim \Kr/I.
	\end{equation*}
\end{theorem}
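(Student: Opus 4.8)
The plan is to obtain the statement directly from the Auslander--Buchsbaum formula, following the citation \cite[Corollary 19.15]{Eis}. Recall that this formula asserts that if $R$ is a Noetherian local ring and $M$ a finitely generated $R$-module with $\projd_R(M) < \infty$, then
\begin{equation*}
	\projd_R(M) + \mathrm{depth}(M) = \mathrm{depth}(R).
\end{equation*}
First I would check that the hypotheses apply to $R = \Kr$ and $M = \Kr/I$: every finitely generated $\Kr$-module has a finite free resolution by \cite[Corollary 19.6]{Eis}, so $\projd_{\Kr}(\Kr/I)$ is finite; and since $\Kr$ is a regular local ring of dimension $n$, it is Cohen--Macaulay, whence $\mathrm{depth}(\Kr) = \dim \Kr = n$.

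Feeding these facts into the formula applied to $\Kr/I$ gives the identity
\begin{equation*}
	\projd_{\Kr}(\Kr/I) = \dim \Kr - \mathrm{depth}(\Kr/I).
\end{equation*}
It remains only to recall the definition of Cohen--Macaulayness of the local ring $\Kr/I$, namely that $\mathrm{depth}(\Kr/I) = \dim(\Kr/I)$. Comparing this with the displayed identity shows at once that $\projd_{\Kr}(\Kr/I) = \dim \Kr - \dim(\Kr/I)$ holds if and only if $\mathrm{depth}(\Kr/I) = \dim(\Kr/I)$, i.e.\ if and only if $\Kr/I$ is Cohen--Macaulay.

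This argument is essentially immediate, so there is no real obstacle; the only points deserving attention are verifying that the ambient ring $\Kr$ is Cohen--Macaulay (so that its depth coincides with its dimension) and that finite projective dimension is automatic over the regular local ring $\Kr$. One may also note that, since the inequality $\mathrm{depth}(\Kr/I) \le \dim(\Kr/I)$ always holds, the identity above already yields $\projd_{\Kr}(\Kr/I) \ge \dim \Kr - \dim(\Kr/I)$ unconditionally, with the Cohen--Macaulay property being exactly the case of equality.
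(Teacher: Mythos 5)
Your argument is correct and is precisely what the paper has in mind: the paper simply cites \cite[Corollary 19.15]{Eis} together with the fact that $\Kr$ is a regular local ring, and your proposal unpacks that corollary into its standard derivation from the Auslander--Buchsbaum formula $\projd_{\Kr}(\Kr/I) + \mathrm{depth}(\Kr/I) = \mathrm{depth}(\Kr)$. The two observations you flag---finiteness of projective dimension over the regular local ring $\Kr$, and $\mathrm{depth}(\Kr) = \dim \Kr$---are exactly the ingredients the paper's one-line justification is pointing at, so nothing is missing and the approach is the same.
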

If $\Kr/I$ is Cohen-Macaulay, and has a minimal free resolution, such as $\Fcal_{\Kr/I}$ above, 
then the last minimal Betti number $\beta^{\Kr/I}_{c+1} = n_c$ of $\Kr/I$ is called the 
\emph{Cohen-Macaulay type} of $\Kr/I$. 

Given $F_1, \dots, F_s \in \K[[x]]^p$, the module generated by all $H = (H_1, \dots, H_s)^{T}\in \K[[x]]^s$ such that
$\sum_{i = 1}^{s}H_i F_i = 0$ is called the \emph{module of syzygies on $F_1, \dots, F_s$} and is 
denoted by $Syz(F_1, \dots, F_s)$. The specification of a minimal free resolution such as \eqref{eq:freeresex} for a 
module $M \subseteq \Kr^p$ is equivalent to the specification of certain syzygy modules as follows: 
If the homomorphism $\phi_0$ is given by, 
\begin{equation*}
	\phi_0 = 
	\begin{pmatrix}
		| & & | \\
		S_{1, 0} & \cdots & S_{n_0, 0}\\
		| & & | 
	\end{pmatrix},
\end{equation*}
then $S_{1, 0}, \dots, S_{n_0, 0} \in \Kr^p$ are a minimal basis of generators of the module $M$. 
For $1 \leq k \leq c$, if the homomorphism $\phi_{k}$ is given by the matrix, 
\begin{equation*}
	\phi_k = 
	\begin{pmatrix}
		| & & | \\
		S_{1, k} & \cdots & S_{n_k, k}\\
		| & & | 
	\end{pmatrix},
\end{equation*}
then $S_{1, k}, \dots, S_{n_k, k} \in \Kr^{n_{k-1}}$ are a minimal basis of generators of the
module $Syz(S_{1, k-1}, \dots, S_{n_{k-1},k-1})$. 
If $M = (F_1, \dots, F_s)$ and $F_1, \dots, F_s$ form a standard basis,  the following theorem gives 
us a basis of generators for $Syz(F_1, \dots, F_s)$. 

\begin{theorem}[{cf. \cite[Theorem 15.10]{Eis}, \cite[Theorem 6.2]{BM1}}]
	\label{thm:schreyer}
	Suppose that $F_1, \dots, F_s \in \K[[x]]^p$ form a standard basis of the module they generate. 
	Further, suppose that the s-series vectors of pairs $F_i, F_j$ for 
	$1\leq i < j \leq s$ are $S(F_i, F_j) = P_{i, j} F_i - P_{j, i} F_j$ and that $Q_m^{i,j} \in \K[[x]]$, for $1\leq m \leq s$ and $1\leq i < j \leq s$, 
	are such that the following are 
	standard representations of the s-series vectors of pairs in $F_1, \dots, F_s$: 
	\begin{equation*}
		P_{i, j}F_i - F_{j, i}F_j = \sum_{m = 1}^{s} Q_{m}^{i,j} F_m \text{ for } 1 \leq i < j \leq s
	\end{equation*}
	Then, 
	\begin{equation*}
		\Xi_{i, j} = P_{i, j} e_i - P_{j, i} e_j - \sum_{m = 1}^{s} Q_{m}^{i, j} e_m \text{ for } 1 \leq i < j \leq s
	\end{equation*}
	form a basis of generators of the module $Syz(F_1, \dots, F_s)$, where the $e_i$ for $1 \leq i \leq s$ are 
	the standard basis vectors of $\Kr^s$. 
\end{theorem}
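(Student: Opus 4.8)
The plan is to adapt Schreyer's construction of syzygies for Gröbner bases (\cite[Theorem 15.10]{Eis}) to $\Kr$, with Hironaka's Division Theorem \cite[Theorem 3.1]{BM1} replacing the polynomial division algorithm. Write $\inexp(F_m)=(a_m,b_m)$ for $1\le m\le s$, and for a pair $i<j$ with $b_i=b_j$ put $x^{\gamma_{ij}}=\mathrm{lcm}(x^{a_i},x^{a_j})$, so that the $e_i$- and $e_j$-terms of $\Xi_{i,j}$ are nonzero scalar multiples of $x^{\gamma_{ij}-a_i}e_i$ and $x^{\gamma_{ij}-a_j}e_j$; when $b_i\neq b_j$ we have $S(F_i,F_j)=0$ and may take $\Xi_{i,j}=0$. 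First I would observe that each $\Xi_{i,j}$ is a syzygy: applying the homomorphism $e_m\mapsto F_m$ to $\Xi_{i,j}$ yields $P_{i,j}F_i-P_{j,i}F_j-\sum_m Q_m^{i,j}F_m$, which is $0$ by the assumed standard representation of $S(F_i,F_j)$. Hence $N:=(\Xi_{i,j}:1\le i<j\le s)$ is contained in $\Syz(F_1,\dots,F_s)$, and the content of the theorem is the reverse inclusion.

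For the reverse inclusion I would pass to the Schreyer order $\prec$ on $\Kr^s$, defined by: $x^\alpha e_i\prec x^\beta e_j$ if $\inexp(x^\alpha F_i)<\inexp(x^\beta F_j)$ in the ordering of $\N^n\times\{1,\dots,p\}$, or if these coincide and $i$ precedes $j$ in a once-and-for-all fixed ordering of $\{1,\dots,s\}$. Since $\inexp(x^\alpha F_i)=(\alpha+a_i,b_i)$ and the index set is finite, every monomial term of $\Kr^s$ has only finitely many $\prec$-predecessors, so $\prec$ is a well-order refining the $\mm$-adic filtration and the standard-basis and division machinery of Section \ref{sec:sb} applies to it (cf. Remark \ref{rem:sbb}(iii)); write $\mathrm{in}_\prec$ for the associated initial term. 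The multiplicativity $\inexp(AF)=\inexp(A)+\inexp(F)$ for $A\in\Kr$, $F\in\Kr^p$, combined with the standard-representation identity $\inexp(S(F_i,F_j))=\min_m\inexp(Q_m^{i,j}F_m)$ and the cancellation $\inexp(S(F_i,F_j))>(\gamma_{ij},b_i)=\inexp(P_{i,j}F_i)=\inexp(P_{j,i}F_j)$, shows that every monomial term occurring in $\sum_m Q_m^{i,j}e_m$ is strictly $\prec$-above both $x^{\gamma_{ij}-a_i}e_i$ and $x^{\gamma_{ij}-a_j}e_j$. Consequently $\mathrm{in}_\prec(\Xi_{i,j})$ is, up to a unit, whichever of these two monomials the tie-break rule prefers.

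The main step — and the one I expect to be the real difficulty — is to show that $\{\mathrm{in}_\prec(\Xi_{i,j})\}$ generates the $\prec$-initial module of $\Syz(F_1,\dots,F_s)$ (the $\Kr$-submodule of $\Kr^s$ spanned by the $\prec$-initial terms of all syzygies). Granting this, the initial modules of $N$ and of $\Syz(F_1,\dots,F_s)$ coincide, so $\{\Xi_{i,j}\}$ is a $\prec$-standard basis of $\Syz(F_1,\dots,F_s)$ and in particular generates it. To establish the generation claim, let $0\neq H=\sum_m H_m e_m$ be a syzygy and set $\delta:=\min\{\inexp(tF_m): 1\le m\le s,\ t\text{ a monomial term of }H_m\}$. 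In the relation $\sum_m H_m F_m=0$ the coefficient at the position $\delta$ is the sum of the leading coefficients of the finitely many $tF_m$ with $\inexp(tF_m)=\delta$, hence vanishes; as $t\mapsto\inexp(tF_m)$ is injective for each fixed $m$, this forces at least two distinct indices $k\neq l$ to occur here, with monomials $x^{\beta_k}$ of $H_k$ and $x^{\beta_l}$ of $H_l$ such that $(\beta_k+a_k,b_k)=(\beta_l+a_l,b_l)=\delta$. If $k$ is taken to be the tie-break-preferred among all indices occurring, then $\mathrm{in}_\prec(H)$ is a unit times $x^{\beta_k}e_k$; furthermore $b_k=b_l$ and $x^{\beta_k+a_k}$ is divisible by $\mathrm{lcm}(x^{a_k},x^{a_l})=x^{\gamma_{kl}}$, so $x^{\gamma_{kl}-a_k}$ divides $x^{\beta_k}$, and therefore $\mathrm{in}_\prec(\Xi_{k,l})$ (or $\mathrm{in}_\prec(\Xi_{l,k})$, whichever of the two is defined) divides $\mathrm{in}_\prec(H)$. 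This is the required statement; the reverse containment of initial modules is immediate from $\Xi_{i,j}\in\Syz(F_1,\dots,F_s)$.

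The remaining work is routine but must be done carefully: confirming that $\prec$ is a genuine order for which the theory of Section \ref{sec:sb} goes through (a well-order compatible with the grading $x^\alpha e_i\mapsto|\alpha|+|a_i|$ on $\Kr^s$), disposing of the degenerate pairs with $b_i\neq b_j$, and — the likeliest source of bookkeeping errors — fixing the tie-break rule so that the index $k$ extracted in the last step always carries $\mathrm{in}_\prec(H)$ and is matched by the correctly oriented Schreyer generator $\Xi_{k,l}$ (resp. $\Xi_{l,k}$).
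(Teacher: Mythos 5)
The paper does not prove Theorem \ref{thm:schreyer}; it is stated as an import of Schreyer's theorem, with \cite[Theorem 15.10]{Eis} giving the polynomial case and \cite[Theorem 6.2]{BM1} the power-series adaptation. Your argument reconstructs exactly that proof: verify the $\Xi_{i,j}$ are syzygies, pass to the Schreyer-type order $\prec$ on $\Kr^s$ induced by $x^\alpha e_i\mapsto\inexp(x^\alpha F_i)$ with an index tie-break, show $\mathrm{in}_\prec(\Xi_{i,j})$ is the $\prec$-smaller of $x^{\gamma_{ij}-a_i}e_i$ and $x^{\gamma_{ij}-a_j}e_j$, and then show these monomials generate the $\prec$-initial module of $\Syz(F_1,\dots,F_s)$ via the ``two indices must collide at the minimal position $\delta$'' argument. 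This is sound, and the remark after the theorem (that the $\Xi_{i,j}$ even form a standard basis for a suitable order) is precisely the stronger statement your proof establishes.

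Two small comments on the points you flag as delicate. First, the phrase ``well-order refining the $\mm$-adic filtration'' is not literally correct: if the $|a_i|$ differ, $\prec$ need not respect $|\alpha|$. What is true, and what you state correctly later, is that $\prec$ refines the shifted grading $x^\alpha e_i\mapsto|\alpha|+|a_i|$, and since this shift is bounded (by $\max_i|a_i|$) each monomial still has only finitely many $\prec$-predecessors; that is the property actually needed for Hironaka division over $\Kr^s$ to converge $\mm$-adically, and it is what makes ``$\prec$-standard basis $\Rightarrow$ generating set'' go through. Second, the tie-break bookkeeping you worry about is in fact unproblematic so long as the tie-break is a fixed total order on $\{1,\dots,s\}$ independent of the monomial: if $k$ is the tie-break-preferred index among those contributing at position $\delta$ and $l\ne k$ is another, then the same tie-break applied to $\Xi_{k,l}$ (equivalently $\Xi_{l,k}$) selects the $e_k$-component, so $\mathrm{in}_\prec(\Xi_{k,l})=x^{\gamma_{kl}-a_k}e_k$ divides $\mathrm{in}_\prec(H)=x^{\beta_k}e_k$ as required, with no orientation mismatch.

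One typographical note, not a gap in your argument: the paper's displayed formula for $P_{F,G}$ appears to carry a typo (the exponent should make $\inexp(P_{F,G}F)=\inexp(P_{G,F}G)=(\gamma,i_F)$ so that the initial terms cancel in $S(F,G)$); your proof uses the corrected reading, with the $e_i$-leading term of $\Xi_{i,j}$ a nonzero scalar times $x^{\gamma_{ij}-a_i}e_i$, which is what the statement of the theorem intends.
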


\begin{remark}
	\begin{itemize}
		\item[(i)] In fact, \cite[Theorem 15.10]{Eis} and \cite[Theorem 6.2]{BM1} also provide an ordering 
			with respect to which the $\Xi_{i,j}$ form a standard basis for $Syz(F_1, \dots F_s)$, 
			however this fact is not used in this paper.  
	\end{itemize}
\end{remark}

\section{Approximation of Modules}
\label{sec:main}

\begin{theorem}
	\label{thm:main}
	Let $M = (F_1, \dots, F_s) \subseteq \Kr^p$ be a finitely generated module. 
	Further, let 
	\begin{equation*}
		\Fcal_{M}: 0 \xrightarrow{} \Kr^{n_c} \xrightarrow{\phi_{c}} \Kr^{n_{c-1}} \xrightarrow{\phi_{c-1}} \cdots
	\xrightarrow{\phi_{1}} \Kr^{n_0} \xrightarrow{\phi_0} M \xrightarrow{} 0
	\end{equation*}
	be a minimal free resolution of $M$. Then there is an integer $\mu_0 \in \N$ such that for each 
	$\mu \geq \mu_0$ there exist $F^{(\mu)}_1, \dots, F^{(\mu)}_s \in \Kra^p$, which 
	generate a module $M_{\mu} \subseteq \Kr^p$, and algebraic homomorphisms 
	$\phi^{(\mu)}_j : \Kra^{n_j} \rightarrow \Kra^{n_{j-1}}$ for $1 \leq j \leq c$, $\phi_0^{(\mu)}:\Kra^{n_0} 
	\rightarrow M_{\mu}$ such that,
	\begin{itemize}
		\item[(i)] $j^{\mu} F_i = j^{\mu}F^{(\mu)}_i$ for $1 \leq i \leq s$. 
		\item[(ii)] $j^{\mu} \phi_i = j^{\mu} \phi^{(\mu)}_i$ for $0 \leq i \leq c$. 
		\item[(iii)] $\NN(M) = \NN(M_{\mu})$. 
		\item[(iv)] The following is a minimal free resolution of $M_{\mu}$
			\begin{equation*}
		\Fcal_{M_{\mu}}: 0 \xrightarrow{} \Kr^{n_c} \xrightarrow{\phi^{(\mu)}_{c}} \Kr^{n_{c-1}} \xrightarrow{\phi^{(\mu)}_{c-1}} \cdots
	\xrightarrow{\phi^{(\mu)}_{1}} \Kr^{n_0} \xrightarrow{\phi^{(\mu)}_0} M_{\mu} \xrightarrow{} 0
			\end{equation*}

	\end{itemize}
\end{theorem}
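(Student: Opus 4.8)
The plan is to build the approximations using Artin's Approximation Theorem applied to a carefully chosen system of polynomial equations whose solutions encode simultaneously: a standard basis for $M$, the matrices of a free resolution, the standard representations of all $s$-series vectors appearing in Schreyer's theorem (Theorem \ref{thm:schreyer}), and the coefficient relations expressing the given generators $F_1,\dots,F_s$ in terms of the standard basis and vice versa. The key preliminary step is to replace the given generators by a standard basis: by Theorem \ref{thm:becker} one may enlarge $\{F_1,\dots,F_s\}$ (or pass to a standard basis $G_1,\dots,G_t$) so that $\verts(M)$ is among the initial exponents, and record the linear-over-$\Kr$ change-of-basis data between the $F_i$ and the $G_k$. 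All of this data — the entries of the $\phi_j$, the quotients $Q_m^{i,j}$ in the standard representations, the cofactors expressing $F_i$ through the standard basis — are unknowns; the equations are the syzygy conditions $\phi_{j-1}\circ\phi_j = 0$, the generation conditions $\phi_0(e_k)=G_k$, and the polynomial identities of the standard representations $S(G_i,G_j)=\sum_m Q_m^{i,j}G_m$. These are all polynomial (indeed, bilinear) equations in the unknown coefficients with $\Kr$-coefficients, so Artin's theorem provides algebraic solutions agreeing with the given ones to order $\mu$, for every $\mu \geq \mu_0$.

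Next I would argue that the resulting algebraic data genuinely defines a minimal free resolution of the module $M_\mu$ it generates. Since the initial exponents are determined by finitely many low-order coefficients, choosing $\mu_0$ large enough forces $\inexp(G_k^{(\mu)}) = \inexp(G_k)$ for all $k$, and hence, by Theorem \ref{thm:becker} (the standard-representation criterion survives, being an equation in the system) together with Remark \ref{rem:sbb}(iv), the $G_k^{(\mu)}$ form a standard basis of $M_\mu$ with $\NN(M_\mu)=\NN(M)$. This gives (iii). For (iv): the equations $\phi^{(\mu)}_{j-1}\circ\phi^{(\mu)}_j=0$ make $\Fcal_{M_\mu}$ a complex; exactness I would obtain from the Buchsbaum–Eisenbud acyclicity criterion (\cite[Theorem 20.9]{Eis}) — the relevant statement is that a complex of free modules over a Noetherian local ring is exact iff the expected rank conditions on the maps hold and the ideals of maximal minors $I(\phi_j)$ have depth $\geq j$. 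Both the ranks and the depths of these determinantal ideals are detected by finitely many coefficients (ranks by non-vanishing of specific minors, depth by the length of a maximal regular sequence, again a finite-order condition via Lemma \ref{lem:injective}-type reasoning and the fact that $\Kr$ is regular), so enlarging $\mu_0$ preserves them. Minimality — that $\mathrm{im}(\phi^{(\mu)}_j)\subseteq \mm\Kra^{n_{j-1}}$ — is immediate from the $\mu$-jet agreement (ii) since $\mu_0 \geq 1$ and the original resolution was minimal.

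Finally I would reconcile the $\Kra$-picture with the $\Kr$-picture: a minimal free resolution over $\Kra$ base-changes (by faithful flatness of $\Kr$ over $\Kra$) to a minimal free resolution over $\Kr$, which is what (iv) asserts. The main obstacle I anticipate is not the existence of algebraic solutions — Artin's theorem handles that once the equations are set up — but rather \emph{verifying that all the properties in (iii) and (iv) are "finite-order" conditions}, i.e. that there is a single $\mu_0$ beyond which jet-agreement to order $\mu$ forces the standard-basis property, the equality of diagrams, exactness, and minimality simultaneously. Pinning this down requires the uniform-in-$\mu$ control coming from Hironaka's Division Theorem (stability of $\verts$ and of standard representations under small perturbation) and the depth-sensitivity argument for the Buchsbaum–Eisenbud criterion; packaging these cleanly is where the real work lies.
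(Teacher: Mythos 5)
Your setup---encoding a standard basis, the matrices of the resolution, the standard representations of $s$-series vectors, and the change-of-basis data into an Artin system---is the same as the paper's (equations \eqref{eq:main1}--\eqref{eq:main5}). The argument for (iii), via preservation of initial exponents and the Buchberger criterion \eqref{eq:sbcrit}, also matches. The divergence is in how you establish exactness in (iv), and that is where your proposal has a genuine gap.

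The paper never invokes the Buchsbaum--Eisenbud acyclicity criterion. Instead it keeps the exactness argument inside the Schreyer/standard-basis machinery: equation \eqref{eq:main4} builds the Schreyer syzygy generators $\Xi_{ij,k}$ out of $H$, $P$, $Q$, and equation \eqref{eq:main5} expresses the columns $S_{l,k+1}$ of $\phi_{k+1}$ as $\Kr$-combinations of these $\Xi_{ij,k}$. After Artin, once $\mu$ is large enough that the inequality \eqref{eq:sbcrit} survives, Theorem \ref{thm:schreyer} applies verbatim to the approximants, so the $\Xi^{(\mu)}_{ij,k}$ generate $\Syz(S^{(\mu)}_{1,k},\dots)$ and the relations \eqref{eq:main5} force $\mathrm{im}(\phi^{(\mu)}_{k+1})$ to sit inside (and, together with the minimality/injectivity checks, exhaust) that syzygy module. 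Minimality is read off from the $\mu$-jet agreement, and injectivity of $\phi^{(\mu)}_c$ from Lemma \ref{lem:injective}. No depth argument is needed.

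Your route replaces this with: include the complex condition $\phi^{(\mu)}_{j-1}\circ\phi^{(\mu)}_j=0$ as equations, then verify Buchsbaum--Eisenbud. The rank half of that criterion (non-vanishing of suitable minors) is indeed a finite-jet condition, as you say, and this parallels the paper's use of Lemma \ref{lem:injective} for $\phi_c$. But the depth half is not: you assert that ``the depths of these determinantal ideals are detected by finitely many coefficients,'' and this is not true without further work. For an ideal $J\subseteq\Kr$, $\mathrm{depth}(J)=\mathrm{ht}(J)=n-\dim(\Kr/J)$, and $\dim(\Kr/J)$ is governed by the diagram of initial exponents of $J$---which is exactly the kind of quantity that does \emph{not} persist under $\mu$-jet perturbation of the generators unless you enforce it with additional equations, as the whole theorem is designed to illustrate. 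You would need to control the diagrams of the determinantal ideals $I_{r_k}(\phi_k)$ themselves (via their own standard-basis data in the Artin system), a substantial additional layer that you acknowledge (``where the real work lies'') but do not carry out. So as written, the exactness claim in (iv) is not established. The lesson from the paper's proof is that the Schreyer route makes exactness a \emph{consequence of the equations} rather than a semicontinuity property to be verified afterward, which is precisely what sidesteps the depth issue.

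A minor additional point: your last paragraph about faithful flatness of $\Kr$ over $\Kra$ is unnecessary; the resolution in (iv) is stated directly over $\Kr$ (the $\phi^{(\mu)}_j$ simply have algebraic entries), so no base change needs to be reconciled.
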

\begin{proof}
	Suppose that 
	$\phi_0$ is given by the following $p \times n_0$ matrix, 
	\begin{equation*}
		\phi_0 = 
		\begin{pmatrix}
			| & & | \\
			S_{1, 0} & \cdots & S_{n_0, 0} \\
			| & & |
		\end{pmatrix}
	\end{equation*}
	then $S_{1, 0}, \dots, S_{n_0, 0} \in \Kr^p$ form a minimal basis for $M$. 
	Similarly for $1 \leq i \leq c$, if $\phi_{i} : \Kr^{n_i} \rightarrow \Kr^{n_{i-1}}$ 
	is given by the $n_{i-1} \times n_i$ matrix, 
	\begin{equation*}
		\phi_0 = 
		\begin{pmatrix}
			| & & | \\
			S_{1, i} & \cdots & S_{n_i, i} \\
			| & & |
		\end{pmatrix},
	\end{equation*}
	then $S_{1, i}, \dots, S_{n_i, i} \in \Kr^{n_{i-1}}$ form a minimal basis for 
	the module \\ $Syz(S_{1, i-1}, \dots, S_{n_{i-1}, i-1})$. 
	Further, let $G_{1, 0}, \dots, G_{r_0, 0} \in \Kr^p$ be a standard basis for 
	$M$, and for 
	$1 \leq i \leq c-1$, let $G_{1, i} \dots, G_{r_i, i} \in \Kr^{n_{i-1}}$ be a standard 
	basis for $Syz(S_{1, i-1}, \dots, S_{n_{i-1}, i-1})$.
	Then, there exist $H_{ik, j} \in \Kr$ for $0 \leq j \leq c-1$, 
	$1 \leq k \leq n_j$, $1\leq i \leq r_j$ such that, 
	\begin{equation}
		\label{eq:main1}
		G_{i, j} = \sum_{k = 1}^{n_j} H_{ik, j} S_{k, j} \text{ for } 0 \leq j \leq c-1, 1 \leq i \leq r_j 
	\end{equation}
	Further, there exist $L_{ik, 0} \in \Kr$ for $1 \leq i \leq n_0$ and $1 \leq k \leq s$,  such that 
	\begin{equation}
		\label{eq:main2}
		S_{i, 0} = \sum_{k = 1}^{s} L_{ik, 0} F_k.
	\end{equation}
	For $0 \leq k \leq c-1$, Theorem \ref{thm:becker} implies that there exist monomials
	$P_{ij, k}(x), P_{ji, k}(x) \in \K[x]$ for $1 \leq i < j \leq n_k$ such that the 
	following is a standard representation, 
	\begin{equation}
		\label{eq:main3}
		P_{ij, k}(x) G_{i, k} - P_{ji, k}(x) G_{j, k} = \sum_{l = 1}^{r_k} Q_{ijl, k}G_{l, k} 
		\text{ for } 0 \leq k \leq c-1, 1 \leq i < j \leq n_k.
	\end{equation}
	Explicitly, the condition on the initial exponents for the above to be a standard basis is, 
	\begin{equation}
		\label{eq:sbcrit}
		\inexp(P_{ij, k}(x) G_{i, k} - P_{ji, k}(x) G_{j, k}) = \min\{ \inexp(Q_{ijl,k}G_{l, k}): 1 \leq l \leq r_k\} 
	\end{equation}
	Let $e_{1, k}, \dots, e_{n_k, k}$ be the 
	standard basis vectors of $\Kr^{n_k}$ for $0 \leq k \leq c-1$. 
	By Theorem \ref{thm:schreyer}, and \eqref{eq:main1}, the following are a basis of generators 
	of $Syz(S_{1, k}, \dots S_{n_k, k})$ 
	\begin{equation}
		\label{eq:main4}
		\begin{split}
			\Xi_{ij, k} =& P_{ij, k} (x) (\sum_{l = 1}^{n_k} H_{il, k} e_{l, k})  - 
			P_{ji, k}(x)(\sum_{l = 1}^{n_k} H_{jl, k} e_{l, k}) \\
		&- \sum_{l = 1}^{n_k} Q_{ijl, k}(\sum_{q = 1}^{n_k} H_{lq, k} e_{q, k})  \text{ for } 0 \leq k \leq c-1, 1 \leq i < j \leq r_k.
		\end{split}
	\end{equation}
	Since, $S_{1, k+1}, \dots, S_{n_{k+1}, k}$ are a minimal basis of generators of $Syz(S_{1, k}, \dots, S_{n_k, k})$, there exist $R_{ijl, k} \in \Kr$ such that the following relation holds, 
	\begin{equation}
		\label{eq:main5}
		S_{l, k+1} = \sum_{1 \leq i < j \leq r_k} \Xi_{ij, k}R_{ijl, k} \text{ for } 0 \leq k \leq c-1, 1\leq i \leq n_{k+1}
	\end{equation}
	Now treating the equations \eqref{eq:main1}, \eqref{eq:main2}, \eqref{eq:main3}, \eqref{eq:main4}, 
	and \eqref{eq:main5} as 
	a system of polynomial equations with polynomial coefficients in variables $S_{i, k}$, $G_{i,k}$, $H_{ij, k}$, 
	$Q_{ijl, k}$, and $L_{ij, 0}$, $\Xi_{ij,k}$ and $R_{ijl, k}$ with 
	coefficients in $\K[x]$ (for allowable values of the indices), the 
	above argument establishes the existence of a formal power series solution. 
	In what follows let $n_{-1} = p$ for simplicity. 
	Therefore, 
	for any integer $\mu \in \N$, by Artin's Approximation Theorem \cite[Theorem 1.10]{Ar2}
	there exist 
	\begin{itemize}
		\item[(1)] $F_i^{(\mu)} \in \Kra^p$ such that $j^{\mu}F_i^{(\mu)} = j^{\mu}F_i$ for 
			$1 \leq i \leq s$, 
		\item[(2)] $S^{(\mu)}_{i, k} \in \Kra^{n_{k-1}}$ such that $j^{\mu}S_{i,k}^{(\mu)} = j^{\mu}S_{i,k}i$ 
			for $0 \leq k \leq c$ and $1 \leq i \leq n_k$, 
		\item[(3)] $G^{(\mu)}_{i, k} \in \Kra^{n_{k-1}}$ such that $j^{\mu}G_{i,k}^{(\mu)} = j^{\mu}G_{i,k}i$ 
			for $0 \leq k \leq c-1$ and $1 \leq i \leq r_k$, 
		\item[(4)] $H^{(\mu)}_{ij, k} \in \Kra$ such that $j^{\mu}H_{ij,k}^{(\mu)} = j^{\mu}H_{ij,k}i$ 
			for $0 \leq k \leq c-1$, $1 \leq i \leq r_k$, and $1 \leq j \leq n_k$,
		\item[(5)] $Q^{(\mu)}_{ijl, k} \in \Kra$ such that $j^{\mu}Q_{ijl,k}^{(\mu)} = j^{\mu}Q_{ijl,k}i$ 
			for $0 \leq k \leq c-1$, $1 \leq l \leq r_k$ and $1 \leq i < j \leq r_k$,
		\item[(6)] $L_{ij, 0}^{(\mu)} \in \Kra$ such that $j^{\mu}L_{ij, 0}^{(\mu)} = j^{\mu}L_{ij, 0}$ for 
			$1 \leq i \leq n_0$ and $1 \leq j \leq s$, 
		\item[(7)] $\Xi_{ij, k}^{(\mu)} \in \Kra^{n_k}$ such that 
			$j^{\mu}\Xi_{ij, k}^{(\mu)} = j^{\mu}\Xi_{ij, k}$  for $0 \leq k \leq c-1$ and 
			$1 \leq i < j \leq r_k$, and 
		\item[(8)] $R_{ijl, k}^{(\mu)} \in \Kra$ such that $j^{\mu}R_{ijl, k}^{(\mu)} = j^{\mu} R_{ijl, k}$
			for $0 \leq k \leq c-1$, $1 \leq i < j \leq r_k$, and $1 \leq l \leq n_{k+1}$, 
	\end{itemize}
	that also solve the system of equations given by 
	\eqref{eq:main1}, \eqref{eq:main2}, \eqref{eq:main3}, \eqref{eq:main4} and \eqref{eq:main5}.  Let $(\tilde{\alpha}_k, i_k) = \max\{\inexp(G_{1, k}), \dots, \inexp(G_{r_k, k})\}$ for $0\leq k \leq c$,
	and let $\mu_0 = \max\{|\tilde{\alpha}_0|, \dots, |\tilde{\alpha}_c|\}$. Taking $\mu > \mu_0$, will 
	ensure that the approximants in item (3) above will satisfy the criterion \eqref{eq:sbcrit}, which 
	will, in turn, ensure that the corresponding approximants $\Xi^{(\mu)}_{ij, k}$ 
	for $1 \leq i < j \leq r_k$ in item (7) from a basis of 
	generators of $Syz(S^{(\mu)}_{1, k}, \dots, S^{(\mu)}_{n_k, k})$, for $0 \leq k \leq c-1$.
	Let $M_{\mu} = (F_1^{(\mu)}, \dots, F_s^{(\mu)})$.
	Now, for $0 \leq k \leq c$, define $\phi_k^{(\mu)} : \Kr^{n_k} \rightarrow \Kr^{n_{k-1}}$ to be the algebraic homomorphism 
	given by the matrix 
	\begin{equation*}
		\phi_k^{(\mu)} = 
		\begin{pmatrix}
			| & & |\\
			S_{1, k}^{(\mu)} & \dots & S_{n_k, k}^{(\mu)} \\
			| & & |
		\end{pmatrix}
	\end{equation*}
	Then, for $\mu \geq \mu_0$ one has the following
	\begin{itemize}
		\item[(1)] $G_{1, 0}^{(\mu)}, \dots, G_{r_0, 0}^{(\mu)}$ are a standard basis for 
			$M_{\mu}$. Also, the choice of $\mu_0$ implies that $\{\inexp(G_{1, 0}), \dots, 
				\inexp(G_{r_0, 0}) \} = \{\inexp(G_{1, 0}^{(\mu)}), \dots, 
				\inexp(G_{r_0, 0}^{(\mu)}) \}$, which implies that $\NN(M) = \NN(M_{\mu})$, 
				by point (iv) of Remark \ref{rem:sbb}. 
		\item[(2)] The condition for minimality of the resolution $\Fcal_{M}$, $\phi_i(\Kr^{n_i}) 
			\subseteq \mm \Kr^{n_{i-1}}$ for $1 \leq i \leq c$, implies that, 
			$\phi_k^{(\mu)}(\Kr^{n_{i}}) \subseteq \mm \Kr^{n_{i-1}}$. 
		\item[(3)] The injectivity of $\phi_c$ implies that the determinants of the 
			$n_{c} \times n_{c}$ minors of the matrix of $\phi_c$ are non-zero (Lemma 
			\ref{lem:injective}), which implies that the determinants of the 
			$n_{c} \times n_{c}$ minors of the matrix of $\phi_c^{(\mu)}$ are 
			non-zero, (because the determinant is just a polynomial function of the 
			entries of a matrix), which implies that $\phi_c^{(\mu)}$ is injective.  
		\item[(4)] Items (2), and (3) above imply that the following is a minimal free resolution 
			of the module $M_{\mu}$:
			\begin{equation*}
		\Fcal_{M_{\mu}}: 0 \xrightarrow{} \Kr^{n_c} \xrightarrow{\phi^{(\mu)}_{c}} \Kr^{n_{c-1}} \xrightarrow{\phi^{(\mu)}_{c-1}} \cdots
	\xrightarrow{\phi^{(\mu)}_{1}} \Kr^{n_0} \xrightarrow{\phi^{(\mu)}_0} M_{\mu} \xrightarrow{} 0.
			\end{equation*}
	\end{itemize}
\end{proof}

\section{Regarding convergence}
\label{sec:conv}
In the case when $\K$ is a complete real valued field, it is possible to define the 
ring of convergent power series $\K\{x\}$. Theorem \ref{thm:main} remains valid 
if $\Kr$ is replaced by $\K\{x\}$. Specifically, 
\begin{remark}
	\begin{itemize}
		\item[(i)] Theorem \ref{thm:becker} depends on Hironaka's Division Theorem, 
			which is valid for $\K\{x\}$ by \cite[Theorem 3.4]{BM1}.  
		\item[(ii)] Theorem \ref{thm:schreyer} is also only dependent on \cite[Theorem 3.4]{BM1}, 
			and is proved in this context as \cite[Theorem 6.2]{BM1}. 
		\item[(iii)] All the theory in Section \ref{sec:freeres} is valid for $\Kr$ replaced 
			by any regular local ring, in particular, $\K\{x\}$.  
		\item[(iv)] Artin's Approximation Theorem \cite[Theorem 1.10]{Ar2} used in 
			the proof of Theorem \ref{thm:main} is also valid for approximations of 
			solutions in $\K\{x\}$ of polynomial systems of equations. 
	\end{itemize}
\end{remark}

\section{Applications}
\label{sec:appl}

\subsection{Approximation of Cohen-Macaulay algebras}
\label{sec:cmapprox}

\begin{theorem}
	\label{thm:cmbettiapprox}
	Let $I = (F_1, \dots, F_s)$ be an ideal in $\Kr$ such that $\Kr/I$ is Cohen-Macaulay, has 
	dimension $\dim \Kr/I = n-k$ and has minimal Betti numbers $\beta_0^{\Kr/I}, \dots, \beta_k^{\Kr/I}$. 
	Then, there exists an integer $\mu_0$ such that for each $\mu \geq \mu_0$ there exist, 
	$F^{(\mu)}_1, \dots, F_s^{(\mu)} \in \Kra$ which generate an ideal $I_{\mu} \subseteq \Kr$ such that, 
	\begin{itemize}
		\item[(i)] $j^{\mu}F^{(\mu)}_i = j^{\mu} F_i$ for $1 \leq i \leq s$. 
		\item[(ii)] The ring $\Kr/I_{\mu}$ is Cohen-Macaulay with $\dim \Kr/I_{\mu} = \dim \Kr/I$.
		\item[(iii)] The minimal Betti numbers of $\Kr/I_{\mu}$ satisfy 
			$\beta_i^{\Kr/I_{\mu}} = \beta_i^{\Kr/I}$ for $0 \leq i \leq k$. 
		\item[(iv)] $H_{I_{\mu}} = H_I$.
	\end{itemize}
\end{theorem}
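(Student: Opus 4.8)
The plan is to derive Theorem \ref{thm:cmbettiapprox} directly from Theorem \ref{thm:main} applied to the ideal $I \subseteq \Kr$, viewed as a finitely generated submodule of $\Kr^1$. First I would invoke Remark \ref{rem:idealres}: a minimal free resolution $\Fcal_I$ of $I$ of length $c$ extends, by prepending the inclusion $\iota: I \hookrightarrow \Kr$, to a minimal free resolution $\Fcal_{\Kr/I}$ of $\Kr/I$ of length $c+1$. Since $\Kr/I$ is Cohen-Macaulay of dimension $n-k$ and $\Kr$ is regular of dimension $n$, Theorem \ref{thm:cmthm} gives $\projd_{\Kr}(\Kr/I) = n - (n-k) = k$, so the resolution $\Fcal_{\Kr/I}$ has length exactly $k$, hence $\Fcal_I$ has length $c = k-1$, and the minimal Betti numbers of $\Kr/I$ are $\beta_0^{\Kr/I} = 1$ (the free module $\Kr$ surjecting onto $\Kr/I$) together with $\beta_1^{\Kr/I}, \dots, \beta_k^{\Kr/I}$, which coincide with the minimal Betti numbers $\beta_0^I, \dots, \beta_{k-1}^I$ of $I$ shifted by one.

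Next I would apply Theorem \ref{thm:main} to $M = I = (F_1, \dots, F_s) \subseteq \Kr^1$ with its minimal free resolution $\Fcal_I$. This produces an integer $\mu_0$ and, for each $\mu \geq \mu_0$, algebraic power series $F_1^{(\mu)}, \dots, F_s^{(\mu)} \in \Kra$ generating a module $I_\mu \subseteq \Kr$ — which is an ideal since $p=1$ — together with algebraic homomorphisms $\phi_j^{(\mu)}$ assembling into a minimal free resolution $\Fcal_{I_\mu}$ of $I_\mu$ with the \emph{same} ranks $n_0, \dots, n_c$, and with $\NN(I) = \NN(I_\mu)$, and $j^\mu F_i^{(\mu)} = j^\mu F_i$. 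Item (i) of the theorem is then immediate. For item (iv), $\NN(I) = \NN(I_\mu)$ combined with Lemma \ref{lem:hsfunc} gives $H_{I_\mu}(\eta) = \#\{\beta \in \N^n \setminus \NN(I_\mu) : |\beta| \leq \eta\} = \#\{\beta \in \N^n \setminus \NN(I) : |\beta| \leq \eta\} = H_I(\eta)$ for all $\eta \geq 1$, so $H_{I_\mu} = H_I$. In particular, by Remark \ref{rem:HSpoly}(ii), $\dim \Kr/I_\mu = \deg H_{I_\mu} = \deg H_I = \dim \Kr/I = n-k$.

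For items (ii) and (iii), I would again invoke Remark \ref{rem:idealres}: prepending $\iota: I_\mu \hookrightarrow \Kr$ to $\Fcal_{I_\mu}$ yields a minimal free resolution $\Fcal_{\Kr/I_\mu}$ of $\Kr/I_\mu$ of length $c+1 = k$, whose free modules have ranks $1, n_0, \dots, n_c$, exactly matching those of $\Fcal_{\Kr/I}$. By the uniqueness of minimal free resolutions (\cite[Theorem 20.2]{Eis}), $\projd_{\Kr}(\Kr/I_\mu) = k$ and $\beta_i^{\Kr/I_\mu} = \beta_i^{\Kr/I}$ for $0 \leq i \leq k$, giving item (iii). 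Finally, $\projd_{\Kr}(\Kr/I_\mu) = k = n - (n-k) = \dim \Kr - \dim \Kr/I_\mu$, so Theorem \ref{thm:cmthm} shows $\Kr/I_\mu$ is Cohen-Macaulay, establishing item (ii).

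The proof is essentially a bookkeeping assembly of the tools already in hand, so there is no deep obstacle; the one point requiring care is ensuring that the resolution of $I_\mu$ really has the same length $c$ as that of $I$ — i.e., that Theorem \ref{thm:main}(iv) delivers a resolution terminating at the same spot — which is what pins down $\projd_{\Kr}(\Kr/I_\mu) = k$ and hence both the Betti number equalities and, via Theorem \ref{thm:cmthm}, the Cohen-Macaulayness. A secondary subtlety is confirming that $\dim \Kr/I_\mu = \dim \Kr/I$ genuinely follows from $H_{I_\mu} = H_I$ rather than needing to be extracted separately from the resolution; but Remark \ref{rem:HSpoly}(ii) handles this cleanly, and the two routes agree.
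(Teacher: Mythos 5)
Your proposal is correct and follows essentially the same route as the paper's proof: apply Theorem \ref{thm:main} to $I\subseteq\Kr^{1}$, use Remark \ref{rem:idealres} together with Theorem \ref{thm:cmthm} to identify $c=k-1$ and pass between resolutions of $I$ and $\Kr/I$, deduce $H_{I_{\mu}}=H_{I}$ from $\NN(I)=\NN(I_{\mu})$ via Lemma \ref{lem:hsfunc}, read off $\dim\Kr/I_{\mu}$ from the Hilbert--Samuel polynomial via Remark \ref{rem:HSpoly}, and then extract Cohen--Macaulayness and the Betti number equalities from the approximating minimal free resolution. The only cosmetic difference is that you spell out the shift-by-one between the Betti numbers of $I$ and of $\Kr/I$, which the paper leaves implicit.
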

\begin{proof}
	Let the following be a minimal free resolution of $I$
	\begin{equation*}
		\Fcal_{I}: 0 \xrightarrow{} \Kr^{n_c} \xrightarrow{\phi_{c}} \Kr^{n_{c-1}} \xrightarrow{\phi_{c-1}} \cdots
		\xrightarrow{\phi_{1}} \Kr^{n_0} \xrightarrow{\phi_0} I \xrightarrow{} 0.
	\end{equation*}
	Then by Remark \ref{rem:idealres}, and Theorem \ref{thm:cmthm}, $c=k-1$ and the following is a minimal free resolution of $\Kr/I$ 
	\begin{equation*}
		\Fcal_{\Kr/I}: 0 \xrightarrow{} \Kr^{n_{k-1}} \xrightarrow{\phi_{k-1}} \cdots
		\xrightarrow{\phi_{1}} \Kr^{n_0} \xrightarrow{\phi_0} \Kr \xrightarrow{\pi} \Kr/I \rightarrow 0.
	\end{equation*}
	Now, by Theorem \ref{thm:main}, there exists $\mu_0 \in \N$, such that for each $\mu \geq \mu_0$ there exist, 
	$F_1^{(\mu)}, \dots, F_s^{(\mu)} \in \Kra$, generating 
	$I_{\mu} \subseteq \Kr$ such that $j^{\mu}F_i^{(\mu)} = j^{\mu}F_i$, $\NN(I) = \NN(I_{\mu})$, and the following is a minimal 
	free resolution of $I_{\mu}$ 
	\begin{equation*}
		\Fcal_{I_{\mu}}: 0 \xrightarrow{} \Kr^{n_{k-1}} \xrightarrow{\phi^{(\mu)}_{k-1}} \Kr^{n_{k-2}} \xrightarrow{\phi^{(\mu)}_{k-2}} \cdots
	\xrightarrow{\phi^{(\mu)}_{1}} \Kr^{n_0} \xrightarrow{\phi^{(\mu)}_0} I_{\mu} \xrightarrow{} 0,
	\end{equation*}
	where $\phi_{i}^{(\mu)}$ are algebraic homomorphisms for $0 \leq i \leq k-1$.
	This implies that 
	\begin{itemize}
		\item[(1)] The following is a minimal free resolution of $\Kr/I_{\mu}$ 
			\begin{equation*}
				\Fcal_{\Kr/I_{\mu}}: 0 \xrightarrow{} \Kr^{n_{k-1}} \xrightarrow{\phi^{(\mu)}_{k-1}} \cdots
				\xrightarrow{\phi^{(\mu)}_{1}} \Kr^{n_0} \xrightarrow{\phi^{(\mu)}_0} \Kr \xrightarrow{\pi} \Kr/I_{\mu} \rightarrow 0.  
			\end{equation*}
		\item[(2)] $H_{I_{\mu}} = H_{I}$, by Lemma \ref{lem:hsfunc}. 	
	\end{itemize}
	Item (2) above implies that the Hilbert-Samuel polynomials of $I$ and $I_{\mu}$ and the same 
	which by Remark \ref{rem:HSpoly} implies that $\dim \Kr/I = \dim \Kr/I_{\mu} = n-k$. Now, because 
	of item (1) above, Theorem \ref{thm:cmthm} 
	implies that $\Kr/I_{\mu}$ is Cohen-Macaulay, 
	Item (1) also implies that $\beta_i^{\Kr/I_{\mu}} = \beta_i^{\Kr/I}$, for all $0 \leq i \leq k$.  
\end{proof}

The above implies in particular that the Cohen-Macaulay 
type of $\Kr/I$ and $\Kr/I_{\mu}$ are the same for all $\mu \geq \mu_0$. 
By \cite[Corollary 21.16]{Eis}, $\Kr/I$ is Gorenstein if and only if the Cohen-Macaulay type 
of $\Kr/I$ is one. Therefore, an immediate corollary of the above is 
\begin{corollary}
	\label{cor:gorensteinapprox}
	Let $I = (F_1, \dots, F_s)$ be an ideal in $\Kr$ such that $\Kr/I$ is Gorenstein, and has 
	dimension $\dim \Kr/I = n-k$. 
	Then, there exists an integer $\mu_0$ such that for each $\mu \geq \mu_0$ there exist, 
	$F^{(\mu)}_1, \dots, F^{(\mu)} \in \Kra$ which generate an ideal $I_{\mu} \in \Kr$ such that, 
	\begin{itemize}
		\item[(i)] $j^{\mu}F^{(\mu)}_i = j^{\mu} F_i$ for $1 \leq i \leq s$. 
		\item[(ii)] The ring $\Kr/I_{\mu}$ is Gorenstein with $\dim \Kr/I_{\mu} = \dim \Kr/I$.
		\item[(iv)] $H_{I_{\mu}} = H_I$.
	\end{itemize}
\end{corollary}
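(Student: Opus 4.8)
The plan is to derive Corollary \ref{cor:gorensteinapprox} as a direct consequence of Theorem \ref{thm:cmbettiapprox}, exactly as the paragraph preceding the corollary suggests. The key observation is that Gorensteinness of a Cohen-Macaulay local ring $\Kr/I$ is detected by a single numerical invariant: by \cite[Corollary 21.16]{Eis}, a Cohen-Macaulay local ring is Gorenstein if and only if its Cohen-Macaulay type equals $1$. Since $\Kr$ is a regular local ring, the Cohen-Macaulay type of $\Kr/I$ is, by definition (as recalled just after Theorem \ref{thm:cmthm}), the last minimal Betti number $\beta^{\Kr/I}_{c+1} = n_c$ appearing in a minimal free resolution of $\Kr/I$. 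Therefore the hypothesis that $\Kr/I$ is Gorenstein is equivalent to saying that $\Kr/I$ is Cohen-Macaulay and its last minimal Betti number is $1$, which puts us squarely in the situation covered by Theorem \ref{thm:cmbettiapprox}.

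First I would apply Theorem \ref{thm:cmbettiapprox} to the ideal $I$, which is legitimate since Gorenstein implies Cohen-Macaulay. This yields an integer $\mu_0$ such that for every $\mu \geq \mu_0$ there are algebraic power series $F^{(\mu)}_1, \dots, F^{(\mu)}_s \in \Kra$ generating an ideal $I_\mu \subseteq \Kr$ with $j^\mu F^{(\mu)}_i = j^\mu F_i$, with $\Kr/I_\mu$ Cohen-Macaulay of dimension $\dim \Kr/I_\mu = \dim \Kr/I = n-k$, with $\beta^{\Kr/I_\mu}_i = \beta^{\Kr/I}_i$ for all $0 \leq i \leq k$, and with $H_{I_\mu} = H_I$. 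Conclusions (i) and (iv) of the corollary are then immediate, being literally conclusions (i) and (iv) of Theorem \ref{thm:cmbettiapprox}; the statement $\dim \Kr/I_\mu = \dim \Kr/I$ in (ii) is likewise part of conclusion (ii) of the theorem.

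It remains to verify the Gorenstein half of conclusion (ii): that $\Kr/I_\mu$ is Gorenstein. Since $\Kr/I$ is Cohen-Macaulay of dimension $n-k$, Theorem \ref{thm:cmthm} gives $\projd_{\Kr}(\Kr/I) = k$, so a minimal free resolution of $\Kr/I$ has the form $0 \to \Kr^{n_{k-1}} \to \cdots \to \Kr^{n_0} \to \Kr \to \Kr/I \to 0$ and the Cohen-Macaulay type of $\Kr/I$ is $\beta^{\Kr/I}_k = n_{k-1}$, which equals $1$ by the Gorenstein hypothesis together with \cite[Corollary 21.16]{Eis}. Because $\Kr/I_\mu$ is Cohen-Macaulay of the same dimension $n-k$ with the same minimal Betti numbers $\beta^{\Kr/I_\mu}_i = \beta^{\Kr/I}_i$ for $0 \leq i \leq k$, its Cohen-Macaulay type is $\beta^{\Kr/I_\mu}_k = \beta^{\Kr/I}_k = 1$. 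Applying \cite[Corollary 21.16]{Eis} once more, now in the direction ``Cohen-Macaulay type $1$ implies Gorenstein,'' we conclude that $\Kr/I_\mu$ is Gorenstein, completing conclusion (ii).

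There is no real obstacle here; the only point requiring a moment's care is the bookkeeping that the last Betti number preserved by Theorem \ref{thm:cmbettiapprox} is precisely the one whose value $1$ characterizes Gorensteinness among Cohen-Macaulay rings — that is, confirming that the index range $0 \leq i \leq k$ in conclusion (iii) of Theorem \ref{thm:cmbettiapprox} genuinely includes the Cohen-Macaulay type $\beta^{\Kr/I}_k$. This follows from the dimension identity $\projd_{\Kr}(\Kr/I) = \dim \Kr - \dim \Kr/I = n - (n-k) = k$ supplied by Theorem \ref{thm:cmthm}, so the resolution of $\Kr/I$ stops exactly at homological degree $k$ and $\beta^{\Kr/I}_k$ is its final, nonzero Betti number. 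One should also note, as the paper does, that conclusion (iii) of the corollary is numbered (iv) in the statement, matching item (iv) of Theorem \ref{thm:cmbettiapprox}; this is a purely cosmetic matter and does not affect the argument.
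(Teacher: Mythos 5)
Your proof is correct and follows the same route the paper takes: apply Theorem \ref{thm:cmbettiapprox} (valid since Gorenstein implies Cohen-Macaulay), observe that the preserved minimal Betti numbers include the last one $\beta^{\Kr/I}_k$, which is the Cohen-Macaulay type, and invoke \cite[Corollary 21.16]{Eis} in both directions to transfer the Gorenstein property. The bookkeeping you add about $\projd_{\Kr}(\Kr/I)=k$ is a helpful, explicit confirmation of what the paper leaves implicit.
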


\begin{remark}
	\begin{itemize}
		\item[(i)] In the case where $\K$ is a complete real-valued field, by the remarks in Section \ref{sec:conv},
			Theorem \ref{thm:cmbettiapprox} and Theorem \ref{cor:gorensteinapprox} remain true when $\Kr$ is replaced by $\K\{x\}$.
		\item[(ii)] The result \cite[Theorem 8.1]{JP} follows immediately from Theorem \ref{thm:cmbettiapprox}.
	\end{itemize}
\end{remark}

\subsection{Approximation of flat maps}
\label{sec:flatapprox}
Throughout this section $m$ will be a fixed integer and 
$y$ will denote the $m$-tuple of variables $(y_1, \dots, y_m)$. 
Consider a homomorphism rings $\phi:\K[[y]] \rightarrow \Kr/I$, 
for some ideal $I \subseteq \Kr$. 
Such a map can be completely specified by 
specifying the images of $y_1, \dots, y_m$.
The images of $y_1, \dots, y_m$ under $\phi$ can 
belong to $\Kr/I$. Let  
$\pi: \Kr \rightarrow \Kr/I$ be the canonical 
projection. A homomorphism 
$\phi: \K[[y]] \rightarrow \Kr/I$ can be 
completely specified by giving 
$\tilde{\phi}(y_i) = \phi_i (x) \in \Kr$, for $1\leq i \leq m$, 
for some $\tilde{\phi}: \K[[y]]\rightarrow \Kr$ such 
that $\phi = \pi \circ \tilde{\phi}$. In such a case 
the homomorphism $\phi$ is said to be \emph{defined by} 
$\phi_i (x) \in \Kr$ for $1\leq i \leq m$.  
The homomorphism $\phi$ is called 
\emph{algebraic} if it is defined by 
$\phi_i (x) \in \Kra$ for $1\leq i \leq m$.  
Note that the special fibre of a homomorphism 
$\phi: \K[[y]] \rightarrow \Kr/I$ is 
isomorphic to $\Kr/(I + J)$ where, 
$J$ is the ideal generated by 
any set of power series $\phi_i (x) \in \Kr$
defining $\phi$. 

\begin{theorem}
	\label{thm:flatapprox}
	Suppose that $I = (F_1, \dots, F_s) \subseteq \Kr$ is an ideal, $\Kr/I$ is Cohen-Macaulay and that $\phi: \K[[y]] \rightarrow \Kr/I$ is 
	flat homomorphism of rings, defined by $\phi_i(x) \in \Kr$ for $1 \leq i \leq m$. Then there exists a $\mu_0 \in \N$ such 
	that for each $\mu \geq \mu_0$ there exist $F^{(\mu)}_1, \dots, F^{(\mu)}_s \in \Kra$ which generate an ideal $I_{\mu} \subseteq \Kr$ 
	and an algebraic homomorphism $\phi_{\mu} : \K[[y]] \rightarrow \Kr$ defined by $\phi_{i}^{(\mu)} \in \Kra$, $1\leq i \leq m$, such that, 
	\begin{itemize}
		\item[(i)] $j^{\mu} F_i^{(\mu)} = j^{\mu} F_i$ for $1\leq i \leq s$. 
		\item[(ii)] $j^{\mu} \phi_{i}^{(\mu)} = j^{\mu} \phi_i$ for $1 \leq i \leq m$. 
		\item[(iii)] $\phi_{\mu}$ is a flat homomorphism.  
		\item[(iv)] If $J = (F_1, \dots, F_s, \phi_1, \dots, \phi_m)$ and $J_{\mu} = (F_1^{(\mu)}, \dots, F_s^{(\mu)}, \phi_1^{(\mu)}, \dots, \phi_m^{(\mu)})$, 
			then $H_J = H_{J_{\mu}}$ and the minimal Betti numbers of $\Kr/J$ and $\Kr/J_{\mu}$ are 
			the same. 
		\item[(v)] $H_I = H_{I_{\mu}}$ and the minimal Betti numbers of $\Kr/I$ and $\Kr/I_{\mu}$ are
			the same. 
	\end{itemize}
\end{theorem}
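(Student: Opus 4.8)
The plan is to reduce Theorem \ref{thm:flatapprox} to Theorem \ref{thm:cmbettiapprox} and Theorem \ref{thm:main} by treating the flatness condition as an extra system of polynomial equations to be preserved by Artin approximation. First I would recall the local criterion for flatness over $\K[[y]]$: since $\K[[y]]$ is a regular local ring, a finitely generated module is flat if and only if it is free, and more usefully here, $\phi$ is flat if and only if $\mathrm{Tor}_1^{\K[[y]]}(\Kr/I, \K) = 0$, or equivalently, if and only if every relation among the images $\phi(y_i)$ in $\Kr/I$ lifts to a relation with coefficients in $\mm_{\K[[y]]}$ — concretely, flatness is governed by the condition that a chosen finite free resolution of $\K$ over $\K[[y]]$ (the Koszul complex on $y_1,\dots,y_m$) stays exact after tensoring with $\Kr/I$. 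So the first step is to pin down a \emph{finite, equational} witness for flatness: an exact sequence that, when tensored up, produces finitely many polynomial identities among the $\phi_i$, the $F_j$, the syzygy coefficients, and auxiliary unknowns.

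Next I would set up the big system. Take the minimal free resolution $\Fcal_{\Kr/J}$ of $\Kr/J$ where $J = (F_1,\dots,F_s,\phi_1,\dots,\phi_m)$ — note the special fibre is $\Kr/J$ by the remark preceding the theorem — and simultaneously the minimal free resolution $\Fcal_{\Kr/I}$ of $\Kr/I$. Both of these can be encoded, exactly as in the proof of Theorem \ref{thm:main}, as a polynomial system in the generators, standard basis elements, division-representation coefficients, syzygy coefficients, and $s$-series representation data, with the diagram-of-initial-exponents constraints controlled by picking $\mu_0$ larger than the order of all the relevant initial exponents. To this system I append the flatness equations: writing the Koszul relations $y_i y_j - y_j y_i = 0$ (trivial) is not enough, so instead one uses that flatness of $\phi$ is equivalent to the statement that the induced maps in the Koszul complex $K_\bullet(\phi_1,\dots,\phi_m;\Kr/I)$ have the "expected" kernels; equivalently by the standard criterion (e.g. via the equivalence of flatness with the vanishing of $\mathrm{Tor}_1$, checkable on the Koszul complex), there exist power series matrices expressing each Koszul syzygy of the $\phi_i$ modulo $I$ as a combination of the "obvious" Koszul syzygies — and these existence statements are precisely polynomial equations in new unknowns. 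All of this admits a formal power series solution by hypothesis, so Artin's Approximation Theorem \cite[Theorem 1.10]{Ar2} yields algebraic approximants agreeing to order $\mu$ and solving the whole system.

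Having the approximants, I would then verify the conclusions in order: (i) and (ii) are immediate from the jet conditions; for (iii), the preserved flatness equations for $\phi_\mu$, together with the preserved resolution data for $\Kr/I_\mu$, force $\phi_\mu$ to be flat — here one invokes that the approximate solution still witnesses exactness of the tensored Koszul complex, so $\mathrm{Tor}_1^{\K[[y]]}(\Kr/I_\mu,\K)=0$ and hence $\Kr/I_\mu$ is $\K[[y]]$-flat; (iv) follows because the preserved minimal free resolution of $\Kr/J$ gives $\NN(J)=\NN(J_\mu)$ (via Remark \ref{rem:sbb}(iv) and the $\mu_0$ choice), hence $H_J=H_{J_\mu}$ by Lemma \ref{lem:hsfunc}, and equal Betti numbers directly from the preserved resolution; (v) is exactly the conclusion of Theorem \ref{thm:cmbettiapprox} applied to $I$, which one can either re-derive inside this combined system or cite, noting that the same $\mu_0$ can be taken to work for both by enlarging it. One subtlety: Theorem \ref{thm:cmbettiapprox} gives that $\Kr/I_\mu$ is Cohen-Macaulay with the same dimension, which is needed implicitly to guarantee that the special-fibre dimension behaves correctly, though strictly speaking flatness of $\phi_\mu$ plus equal special-fibre Hilbert-Samuel function already controls the relevant invariants.

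The main obstacle I anticipate is finding the right \emph{finite equational encoding of flatness} — flatness is an $\mathrm{Tor}$-vanishing condition, which is a priori a statement about an infinite resolution, and one must be careful to express it as finitely many polynomial equations that Artin approximation can handle. The clean way is: $\phi$ flat $\iff$ the Koszul complex $K_\bullet(\phi_1(x),\dots,\phi_m(x))\otimes_{\Kr}\Kr/I$ is acyclic in positive degrees, and acyclicity of a \emph{finite} complex of free modules can be tested by the Buchsbaum–Eisenbud exactness criterion (ranks and depths of ideals of minors), or more directly, by exhibiting contracting-homotopy-type data — matrices $\theta_i$ over $\Kr$ with $\theta_{i}\partial_{i} + \partial_{i+1}\theta_{i+1} \equiv \mathrm{id} \pmod{I}$ on the relevant pieces. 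Those homotopy matrices are the new unknowns; their defining relations are polynomial; and a formal solution exists precisely because $\phi$ is flat. Once this encoding is in hand, the rest is a routine, if lengthy, bookkeeping exercise parallel to the proof of Theorem \ref{thm:main}, and the choice of $\mu_0$ is governed, as before, by requiring $\mu_0$ to exceed $|\alpha|$ for every initial exponent $\alpha$ appearing among all standard bases involved (for $I$, for $J$, and for the syzygy modules in both resolutions).
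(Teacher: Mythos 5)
Your proposal misses the key simplification that makes the paper's proof clean, and your proposed equational encoding of flatness has a real problem. The paper does \emph{not} add any flatness equations to the Artin system at all. Instead it uses the local criterion of flatness over a regular base (\cite[Theorem B.8.11]{GLS}): for a local homomorphism $R \to S$ with $R$ regular and $S$ Cohen-Macaulay, flatness is \emph{equivalent} to the numerical condition $\dim S = \dim R + \dim(S/\mm_R S)$. So the paper's argument runs as follows: from flatness of $\phi$ and Cohen-Macaulayness of $\Kr/I$ one gets $\dim \Kr/I = \dim \K[[y]] + \dim \Kr/J$ and (via \cite[Theorem B.8.15]{GLS}) that $\Kr/J$ is Cohen-Macaulay; then Theorem \ref{thm:cmbettiapprox} is applied simultaneously to $I$ and $J$ (one big Artin system, as you correctly anticipate), which preserves both Hilbert-Samuel functions, hence both Krull dimensions, and both Cohen-Macaulay properties; and finally the same numerical criterion, now read in the reverse direction with the preserved dimensions and preserved Cohen-Macaulayness of $\Kr/I_\mu$, yields flatness of $\phi_\mu$ for free. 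No Koszul complex, no $\mathrm{Tor}$ computation, no homotopy data.

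Your proposed encoding of flatness is where the gap lies. The contracting-homotopy condition $\theta_i\partial_i + \partial_{i+1}\theta_{i+1} \equiv \mathrm{id} \pmod{I}$ asserts that the tensored Koszul complex is \emph{split} exact, which is strictly stronger than acyclicity and simply fails in the typical case (the Koszul complex of a regular sequence on $\Kr/I$ resolves $\Kr/J \neq 0$ and is not split). The Buchsbaum–Eisenbud alternative you mention is a rank-plus-depth condition, and while the rank part is polynomial (and is preserved by approximation just as in item (3) of the proof of Theorem \ref{thm:main}), the depth part is not a finite system of polynomial equations in the entries — it is a grade condition on ideals of minors, and it is not clear how to feed it to Artin approximation directly. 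So as written your flatness witness is either too strong or not actually equational. The paper's route eliminates the need for any such witness: once $\NN(I)$, $\NN(J)$ and both minimal resolutions are preserved, dimensions and Cohen-Macaulayness follow, and flatness then comes from the numerical criterion as a consequence rather than a constraint. You also have the logical dependency inverted at the end: the paper \emph{derives} flatness of $\phi_\mu$ from the preserved dimension equality and Cohen-Macaulayness, whereas you suggest using flatness of $\phi_\mu$ to control the special-fibre invariants.
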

\begin{proof}
	By the flatness criterion, \cite[Theorem B.8.11]{GLS}, the flatness of $\phi$, and the Cohen-Macaulayness of $\Kr/I$ imply that, 
	\begin{equation*}
		\dim \Kr/I = \dim \K[[y]] + \dim \Kr/J, 
	\end{equation*}
	where $J$ is defined as in point (iv) in the statement of the theorem.  
	Further, by \cite[Theorem B.8.15]{GLS}, the flatness of $\phi$, and the 
	Cohen-Macaulayness of $\Kr/I$ imply that $\Kr/J$ is also Cohen-Macaulay.
	Now, in the proofs of Theorem \ref{thm:main} and Theorem \ref{thm:cmbettiapprox}, the systems of equations 
	corresponding to the approximations of $I = (F_1, \dots, F_s)$ and 
	$J = (F_1, \dots, F_s, \phi_1, \dots, \phi_m)$ can be 
	solved simultaneously, and thus, there exists $\mu_0 \in \N$, 
	such that for each $\mu \geq \mu_0$ there exist
	$F^{(\mu)}_1, \dots, F^{(\mu)}_s, \phi^{(\mu)}_1, \dots, \phi^{(\mu)}_m \in \Kra$ such 
	that if $I_{\mu} = (F^{(\mu)}_1, \dots, F^{(\mu)}_s)$ and $J_{\mu} = (F^{(\mu)}_1, \dots, F^{(\mu)}_s, \phi^{(\mu)}_1, \dots, \phi^{(\mu)}_m)$, 
	then $H_I = H_{I_{\mu}}$ and $H_{J} = H_{J_{\mu}}$, $\dim \Kr/I = \dim \Kr/I_{\mu}$, $\dim \Kr/J = \dim \Kr/J_{\mu}$, and
	$\Kr/I_{\mu}$ and $\Kr/J_{\mu}$ are both Cohen-Macaulay. 
	Also, the minimal Betti numbers of $\Kr/I$ and $\Kr/J$ are the same as those of $\Kr/I_{\mu}$ and 
	$\Kr/J_{\mu}$ respectively. 
	By \cite[Theorem B.8.11]{GLS} again, the Cohen-Macaulayness of $\Kr/I_{\mu}$ and the 
	relationship between the dimensions above, imply that the homomorphism  
	$\phi_{\mu}: \K[[y]] \rightarrow \Kr/I_{\mu}$ defined by $\phi_{i}^{(\mu)}$ for $1\leq i \leq m$ is flat. 
\end{proof}
\begin{remark}
	\begin{itemize}
		\item[(i)] When $\K$ is a complete 
			real valued field, by the comments in Section \ref{sec:conv}, Theorem \ref{thm:flatapprox} remains 
			valid if $\K[[y]]$ and $\Kr$ are replaced by $\K\{y\}$ and $\K\{x\}$ respectively. 
		\item[(ii)] The result \cite[Theorem 1.2]{AYP} follows from Theorem \ref{thm:flatapprox} above immediately. 
	\end{itemize}
\end{remark}

\bibliographystyle{amsplain}

\bibliography{res_approx}{}

%

\end{document}